\newtheorem{theorem}{Theorem}[section]
\newtheorem{thm}[theorem]{Theorem}
\newtheorem{prop}[theorem]{Proposition}
\newtheorem{fact}[theorem]{Fact}
\newtheorem{cor}[theorem]{Corollary}
\newtheorem{lemma}[theorem]{Lemma}
\newtheorem{question}[theorem]{Question}
\theoremstyle{definition}
\newtheorem{defn}[theorem]{Definition}
\theoremstyle{remark}
\newtheorem{remark}[theorem]{Remark}
\newcommand{\WM}{\widetilde{\cal M}}
\newcommand{\WR}{\widetilde{\cal R}}
\newcommand{\la}{\langle}
\newcommand{\ra}{\rangle}
\newcommand{\sub}{\subseteq}
\newcommand{\elsub}{\preccurlyeq}
\newcommand{\dcl}{\operatorname{dcl}}
\newcommand{\cl}{\operatorname{cl}}
\newcommand{\ldim}{\ensuremath{\textup{ldim}}}
\newcommand{\bb}[1]{\ensuremath{\mathbb{#1}}}
\newcommand{\cal}[1]{\ensuremath{\mathcal{#1}}}
\newcommand{\Cal}[1]{\ensuremath{\mathcal{#1}}}
\newcommand{\res}{\ensuremath{\upharpoonright}}
\newcommand{\es}{\ensuremath{\emptyset}}
\newcommand{\sm}{\setminus}
\newcommand{\Z}{\mathbb{Z}}
\newcommand{\Q}{\mathbb{Q}}
\newcommand{\R}{\mathbb{R}}
\title[Counting algebraic points in expansions of o-minimal structures]{Counting algebraic points in expansions of o-minimal structures by a dense  set}
\subjclass[2010]{Primary 03C64,  11G99, Secondary 06F20}
\keywords{o-minimal structure, algebraic point, dense pair, independent set}
\date{\today}
\begin{document}

\author {Pantelis  E. Eleftheriou}

\address{Department of Mathematics and Statistics, University of Konstanz, Box 216, 78457 Konstanz, Germany}

\email{panteleimon.eleftheriou@uni-konstanz.de}

\thanks{Research supported by a Research Grant from the German Research Foundation (DFG) and a Zukunftskolleg Research Fellowship.}

\begin{abstract} Let $\cal R =\la \R, <, +, \cdot, \dots\ra$ be an o-minimal expansion of the real field, and $P\sub \R$ a dense elementary substructure of $\cal R$ or a dense
independent set.  We prove that if  $X\sub \R^n$ is definable in the pair $\la \cal R,  P\ra$ and contains ``many" algebraic points, then it is dense in an infinite semialgebraic set.
\end{abstract}

\begin{abstract}
The Pila-Wilkie theorem states that if a set $X\sub \R^n$ is definable in an o-minimal structure \cal R  and  contains `many' rational points, then it contains an infinite semialgebraic set.
In this paper, we extend this theorem to an expansion $\WR=\la \cal R, P\ra$ of $\cal R$ by a dense set $P$, which is either an elementary substructure of $\cal R$, or it is independent, as follows. If $X$ is definable in $\WR$ and contains many rational points, then it is dense in an infinite semialgebraic set. Moreover, it contains an infinite set which is $\es$-definable in $\la \overline \R, P\ra$, where $\overline \R$ is the real field.
\end{abstract}

\maketitle

\section{Introduction}

Point counting theorems have recently occupied an important part of model theory, mainly due to their pivotal role in applications of o-minimality to number theory and Diophantine geometry. Arguably, the biggest breakthrough was the Pila-Wilkie theorem \cite{pw}, which roughly states that  if a definable set in an o-minimal structure contains ``many'' rational points, then it contains an infinite semialgebraic set. Pila employed this result together with the so-called Pila-Zannier strategy to give an unconditional proof of certain cases of the Andr\'e-Oort Conjecture \cite{pila2}. An excellent survey on the subject is \cite{scanlon}. Although several strengthenings of these theorems have since been established within the o-minimal setting, the topic remains largely unexplored in more general tame settings.  In this paper, we establish the first point counting theorems in tame expansions of o-minimal structures by a dense set.

Recall that, for a set $X\sub \R^n$, the \emph{algebraic part} $X^{alg}$ of $X$ is defined as the union of all infinite connected semialgebraic subsets of $X$. Pila in \cite{pila2}, generalizing \cite{pw}, proved that if a set $X$ is definable in an o-minimal structure, then $X\sm X^{alg}$ contains ``few" algebraic points of  fixed degree (see  definitions below and Fact \ref{pw}). This statement immediately fails if one leaves the o-minimal setting. For example, the set $\cal A$ of algebraic points itself contains many algebraic points, but $\cal A^{alg}=\es$. However, adding $\cal A$ as a unary predicate to the language of the real field results in a well-behaved model theoretic structure, and it is desirable to retain point counting theorems in that setting. We achieve this goal by means of the following definition.



\begin{defn}\label{def-alg2} Let $X\sub \R^n$. The \emph{algebraic trace part} of $X$, denoted by $X^{alg}_{t}$, is the union of all traces of infinite connected semialgebraic sets in which $X$ is dense. That is,
$$X^{alg}_t=\bigcup\{X\cap T: \text{$T\sub \R^n$  infinite connected semialgebraic, and $T\sub cl(X\cap T)$}\}.$$
\end{defn}

 \noindent The density requirement $T\sub cl(X\cap T)$ is essential: without it, we would always have $X_t^{alg}=X$, as witnessed by $T=\R^n$.

We first show in Section \ref{sec-alg} that the above notion is a natural generalization of the usual notion of the algebraic part of a set, in the following sense.


\begin{prop}
Suppose $X\sub \R^n$ is definable in an o-minimal expansion of the real field. Then $X^{alg}=X^{alg}_t$.
\end{prop}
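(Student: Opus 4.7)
The inclusion $X^{alg}\sub X^{alg}_t$ is immediate: for any infinite connected semialgebraic $S\sub X$, the choice $T:=S$ satisfies $X\cap T=S$ and $T\sub\cl(X\cap T)$, so $S\sub X^{alg}_t$.

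For the reverse inclusion, fix an infinite connected semialgebraic $T\sub\R^n$ with $T\sub\cl(X\cap T)$ and $x\in X\cap T$; set $d:=\dim T$ and $Y:=T\sm X$. The plan is to exhibit an infinite connected semialgebraic subset of $X$ containing $x$. As a preliminary, the standard o-minimal dimension inequality $\dim(\cl(A)\sm A)<\dim A$, applied to $A=X\cap T$ (using $X\cap T\sub T\sub\cl(X\cap T)$), yields $\dim(X\cap T)=d$ and $\dim Y<d$. So $Y$ is definable of strictly smaller dimension than $T$, although it need not be semialgebraic.

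If $x\notin\cl(Y)$, a small semialgebraic open ball $B(x,\varepsilon)$ is disjoint from $Y$, and the connected component of $B(x,\varepsilon)\cap T$ through $x$ is a semialgebraic subset of $X$ containing $x$, infinite because $T$ has positive local dimension at each of its points (being definable, connected, and infinite). In the remaining case $x\in\cl(Y)\sm Y$, I would fix a top-dimensional semialgebraic cell $T_j$ of $T$ with $x\in\cl(T_j)$ (available from semialgebraic cell decomposition of $T$, since the top-dimensional cells are dense in $T$) and work in local semialgebraic coordinates on $T_j$ via its trivialization to an open subset of $\R^d$. The key estimate is that the tangent cone of $Y$ at $x$, viewed as a subset of $S^{d-1}$, has dimension at most $\dim Y - 1\leq d-2$, strictly less than $\dim S^{d-1}=d-1$. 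Hence a direction $v\in S^{d-1}$ can be chosen generically enough both to lie outside this tangent cone and, when $x\notin T_j$, to point into $T_j$ along some semialgebraic arc (obtained via semialgebraic curve selection for $T_j$ at $x$). The straight semialgebraic ray from $x$ along $v$ (transported back through the trivialization if necessary) then lies in $T_j\cap X\sub X$ for small times, providing the required infinite connected semialgebraic arc through $x$, so $x\in X^{alg}$.

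The main obstacle is the tangent cone estimate for the definable (but possibly non-semialgebraic) set $Y$. While classical for semialgebraic $Y$, the statement that the tangent cone of a definable set of dimension $e$ has dimension at most $e$ (and consequently its trace on the unit sphere has dimension at most $e-1$) has to be established in the o-minimal expansion of the real field under consideration; this follows from cell decomposition, definable curve selection, and the dimension theory of definable families, and is the only non-routine ingredient in the argument.
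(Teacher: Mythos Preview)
Your overall strategy matches the paper's: reduce to showing that from any $x\in X\cap T$ one can find an infinite connected semialgebraic subset of $T\setminus Y$ (where $Y=T\setminus X$) containing $x$, using that $\dim Y<\dim T$. The forward inclusion and your Case~1 are fine. There are, however, two genuine gaps in Case~2.

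\textbf{Gap 1.} The assertion that ``the top-dimensional cells are dense in $T$'' is false. Take $T=([0,1]\times\{0\})\cup([1,2]\times[0,1])\subseteq\R^2$: this is connected, semialgebraic, of dimension $2$, but the point $(0,0)$ lies in no closure of a $2$-dimensional cell. What is true (and what the paper uses) is that one can always find a semialgebraic cell $T_j\subseteq T$ with $\dim T_j=\dim_x T$ and $x\in cl(T_j)$. Correspondingly, the relevant estimate is the \emph{local} one $\dim_x Y<\dim_x T$, which does follow from $T\subseteq cl(X\cap T)$ by restricting to a small box (the paper isolates this as a separate lemma). Your global inequality $\dim Y<d$ is correct but insufficient at points where $\dim_x T<d$.

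\textbf{Gap 2.} Even after fixing Gap~1, the tangent-cone step is not justified when $x\in cl(T_j)\setminus T_j$. Curve selection hands you \emph{one} semialgebraic arc into $T_j$, with whatever initial direction it happens to have; it does not let you prescribe that direction. Conversely, a direction $v$ outside the tangent cone of $Y$ need not be the initial direction of any semialgebraic arc from $x$ into $T_j$ (and a straight ray along $v$ need not enter $T_j$ at all, as the cusp-cell $\{0<u<1,\ u^2<w<2u^2\}$ at the origin shows). So the two desiderata you impose on $v$ cannot obviously be satisfied simultaneously by the argument you give. Note also that since the trivialisation of $T_j$ does not extend to the boundary, speaking of rays ``from $x$'' in trivialised coordinates is not meaningful in this case.

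The paper avoids both issues. It first passes to a semialgebraic cell $Z_0$ with $\dim Z_0=\dim_x T$ and proves (its Lemma~\ref{BZ}) that on a small box $B$ one still has $B\cap Z_0\subseteq cl(Z_0\cap X)$, hence $\dim\big((B\cap Z_0)\setminus X\big)<\dim Z_0$. The existence of a semialgebraic set $W\subseteq (B\cap Z_0)\setminus Y$ with $x\in cl(W)$ is then obtained not via tangent cones but by an induction on the ambient dimension (its Lemma~\ref{clZT2}): one projects the cell $B\cap Z_0$ injectively onto an open cell in $\R^{\dim Z_0}$, finds $W$ there, and pulls back. This reduces the problem to the full-dimensional case, where the boundary difficulties disappear. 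If you wish to salvage the tangent-cone route, you would need an additional o-minimal fact to the effect that the set of initial directions of semialgebraic arcs from $x$ into $T_j$ has dimension $\dim_x T_j-1$; the paper's projection argument is a cleaner substitute.
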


\noindent Then, in Sections \ref{sec-dense} and \ref{sec-indep}, we establish point counting theorems in two main categories of tame structures that go beyond the o-minimal setting: dense pairs and expansions of o-minimal structures by a dense independent set. Indeed, we prove that if $X$ is a definable set in these settings, then $X\sm X^{alg}_t$ contains few algebraic points of fixed degree (Theorem \ref{main} below). We postpone a discussion about the general tame setting until later in this introduction, as we now proceed to fix our notation and state the precise theorem. Some familiarity with the basic notions of model theory, such as definability and elementary substructures, is assumed. The reader can consult \cite{vdd-book, marker, pila}. An example of an elementary substructure of the real field is the field $\cal A$ of algebraic numbers.

For the rest of this paper, and unless stated otherwise, we fix an o-minimal expansion $\cal R=\la  \R, <, +, \cdot, \dots\ra$ of the real field $\overline \R=\la \R, <, +, \cdot\ra$, and let $\cal L$ be the language of $\cal R$. We fix an expansion $\widetilde{\cal R}=\la \cal R, P\ra$  of $\cal R$ by a set $P\sub \R$, and let  $\cal L(P)=\cal L\cup \{P\}$ be the language of $\WR$. By `$A$-definable' we mean `definable in $\WR$ with parameters from $A$', and by `$\cal L_A$-definable' we mean `definable in \cal R with parameters from $A$'. We omit the index $A$ if we do not want to specify the parameters. For a subset $A\subseteq \R$, we write $\dcl(A)$ for the definable closure of $A$ in $\Cal R$, and $\dcl_{\cal L(P)}(A)$ for the definable closure in $\WR$. We call a set $X\sub \R$ \emph{$\dcl$-independent over $A$}, if for every $x\in X$, $x\not\in \dcl((X\sm\{x\}) \cup A)$, and simply \emph{$\dcl$-independent} if it is $\dcl$-independent over $\es$. An example of a $\dcl$-independent set in the real field is a transcendence basis over $\Q$.

Following \cite{pila}, we define the \emph{(multiplicative) height} $H(\alpha)$ of an algebraic point $\alpha$ as $H(\alpha)=\exp h(\alpha),$
where $h(\alpha)$ is the \emph{absolute logarithmic height} from \cite[page 16]{bg}.
For a set $X\sub \R^n$, $k\in \Z^{>0}$ and $T\in\R^{>1}$, we define
$$X(k, T)=\{(\alpha_1, \dots, \alpha_n)\in X: \max_i[\Q(\alpha_i):\Q]\le k, \max_i H(\alpha_i)\le T\}$$
and
$$N_k(X, T)=\# X(k, T).$$
We say that \emph{$X$ has few algebraic points} if for every  $k\in \Z^{>0}$ and $\epsilon\in \R^{>0}$,
$$N_k(X, T) = O_{X, k, \epsilon}(T^{\epsilon}).$$
We say that it  \emph{has many algebraic points}, otherwise.


Our main result is the following.

\begin{theorem}\label{main} Suppose $\cal R=\la \R, <, +, \cdot, \dots\ra$ is an o-minimal expansion of the real field, and $P\sub R$ a dense set such that one of the following two conditions holds:
\begin{enumerate}
  \item[(A)] $P\elsub \cal R$ is an elementary substructure.
  \item[(B)] $P$ is a $\dcl$-independent set.
\end{enumerate}
Let $X\sub \R^n$ be definable in $\WR=\la \cal R, P\ra$. Then $X\sm X^{alg}_t$ has few algebraic points.
\end{theorem}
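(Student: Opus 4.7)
The plan is to reduce to the classical Pila-Wilkie theorem (Fact~\ref{pw}) applied to an $\cal L$-definable family, using the density of $P$ to convert the ``contains an infinite semialgebraic subset'' conclusion of Pila-Wilkie into the ``is dense in an infinite semialgebraic set'' conclusion required by the definition of $X^{alg}_t$. The essential input is a structural description of $\cal L(P)$-definable sets: in case~(A), via van den Dries's theory of dense pairs and its extensions, and in case~(B), via the analogous theory for expansions by dense $\dcl$-independent sets. In both settings, every $\cal L(P)$-definable $X\sub\R^n$ can be written, up to operations preserving the counting conclusion, as
$$X=\bigcup_{p\in P^k\cap U}F\bigl(Y_p\times\{p\}\bigr),$$
where $\{Y_p\}_{p\in U}$ is an $\cal L$-definable family of subsets of $\R^m$ over an open semialgebraic $U\sub\R^k$, and $F\colon\R^{m+k}\to\R^n$ is an $\cal L$-definable continuous map.

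The first step is to transfer the counting assumption from $X$ to the total space $Z:=\{(y,p):p\in U,\ y\in Y_p\}$, which is $\cal L$-definable. An algebraic point $x\in X(k,T)$ lifts to some $(y,p)\in Z$ with $p\in P^k$; a height and degree computation --- in (A) using that $P\elsub\cal R$ constrains the admissible $p$'s to be algebraic of controlled degree, in (B) using $\dcl$-independence --- shows that many algebraic points of $X$ produce many algebraic points of $Z$. Applying Pila-Wilkie to $Z$ then yields an infinite connected semialgebraic set $S\sub Z$.

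The next step is to use o-minimal cell decomposition to shrink $S$ so that its projection onto the last $k$ coordinates is a submersion onto an open semialgebraic $V\sub U$, and so that $F|_S$ is continuous with semialgebraic image $T:=F(S)\sub\R^n$, infinite and connected. Since $P$ is dense in $\R$, $P^k\cap V$ is dense in $V$, so $S\cap(\R^m\times P^k)$ is dense in $S$, and by continuity $F(S\cap(\R^m\times P^k))$ is dense in $T$. By construction this image lies in $X$, so $T\sub\cl(X\cap T)$, placing $X\cap T\sub X^{alg}_t$ and contradicting the existence of many algebraic points of $X\sm X^{alg}_t$.

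The main obstacle is the transfer step from algebraic points of $X$ to algebraic points of $Z$: a lift $x\mapsto(y,p)$ may involve coordinates $p\in P$ that are not algebraic, so a naive lift fails. In case~(A), the remedy is that algebraicity of $x$ together with the $\cal L$-data defining $F$ forces the relevant $P$-parameters to satisfy algebraic equations, and elementarity $P\elsub\cal R$ then bounds their heights; in case~(B), $\dcl$-independence allows one to rearrange the presentation so the relevant coordinates of $p$ are themselves algebraic with height controlled by that of $x$. A secondary delicate point is that shrinking $S$ must preserve the image $T$ as infinite and genuinely transverse to the fibers of $F$, which requires a careful choice of cell within $S$.
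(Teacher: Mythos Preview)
Your transfer step --- lifting algebraic points of $X$ to algebraic points of the $\cal L$-definable total space $Z\sub\R^{m+k}$ --- is where the argument breaks, and the remedies you sketch for (A) and (B) are both incorrect.

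In case~(B) your claim that ``$\dcl$-independence allows one to rearrange the presentation so the relevant coordinates of $p$ are themselves algebraic'' is exactly backwards: $\dcl$-independence of $P$ forces $P\cap\cal A=\es$, so \emph{no} coordinate of any $p\in P^k$ is algebraic, and no rearrangement changes this. There is simply no way to produce algebraic points of $Z$ from algebraic points of $X$ along your lift. The paper runs the comparison in the opposite direction: using the $H$-cone decomposition and an induction on large dimension, it shows that after removing an $\cal L$-definable set $F$ of strictly smaller dimension, \emph{every} algebraic point of the $\cal L$-definable set $h(Z\sm F)$ already lies in $X$. The key lemma is that for a supercone $J$ with shell $Z$ one has $Z\cap\dcl(B)^k\sub J\cup F$ with $\dim F<k$; this holds because each fiber $Z_g\sm J_g$ is small, and small sets meet $\dcl(B)$ in only finitely many points (again by $\dcl$-independence of $P$). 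Pila--Wilkie is then applied to $h(Z\sm F)\sub\R^n$ directly, with no lift.

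In case~(A), ``elementarity $P\elsub\cal R$ then bounds their heights'' is not a meaningful inference: elementarity is a first-order notion and says nothing about heights of elements of $P$. The paper's argument here is much shorter than yours and uses the \emph{other} side of the dichotomy: since $P\elsub\cal R$, one has $\cal A\sub\dcl(\es)\sub P$, so $X$ and $X\cap P^n$ have identical algebraic points. Van den Dries gives $X\cap P^n=Y\cap P^n$ for an $\cal L$-definable $Y\sub\R^n$, and Pila--Wilkie is applied to $Y$ in the same ambient $\R^n$ --- no lift, no auxiliary parameters. Density of $X$ in any $\Q$-set $Z\sub Y$ is immediate from $\cal A^n\sub P^n$ and $\cal A$ dense in $Z$.

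In short, the entire proof is organized around the dichotomy $\cal A\sub P$ (case~A) versus $\cal A\cap P=\es$ (case~B), and in both cases one compares $X$ with an $\cal L$-definable set living in the \emph{same} $\R^n$. Your uniform lifting scheme to $\R^{m+k}$ misses both halves of this dichotomy.
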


Note that if   $\cal R=\overline \R$,  Theorem \ref{main} is trivial. Indeed, in both cases (A) and (B), if $X$ is a definable set, then $cl(X)$ is $\cal L$-definable (\cite[Section 2]{egh}). So, in this case, $cl(X)$ is semialgebraic and hence $X^{alg}_t=X$. In fact, whenever  $\WR =\la \overline \R, P\ra$ satisfies Assumption  III from \cite{egh},  the conclusion of Theorem \ref{main} holds. An example of such $\WR$ is an expansion of the real field by a multiplicative group with the Mann property.\\


The contrapositive of Theorem \ref{main} implies that if a definable set  contains many algebraic points, then it is dense in an infinite semialgebraic set. We strengthen this result as follows.




\begin{thm}\label{thm-cor}
Let $X$ be as in Theorem \ref{main}. If $X$ has many algebraic points, then it contains an infinite set $Y$ which is $\es$-definable in $\la \overline \R, P\ra$.
\end{thm}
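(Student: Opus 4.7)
The plan is to combine Theorem \ref{main} with a Zariski-closure argument over $\overline{\mathbb Q}$, then use the predicate $P$ to carve out an $\es$-definable subset of $X$.

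First I would apply Theorem \ref{main}: since $X\sm X^{alg}_t$ has few algebraic points and $X$ has many, $X^{alg}_t$ has many. The set $X^{alg}_t$ is a union of sets $X\cap T$ over infinite connected semialgebraic $T\sub cl(X\cap T)$; these $T$ are parametrised by countably many $\cal L$-formulas, so a pigeonhole argument isolates a single $T$ for which $A:=X\cap T\cap\overline{\mathbb Q}^n$ is infinite. Let $V$ denote the Zariski closure of $A$ over $\overline{\mathbb Q}$. Since $A$ is infinite, $V$ is a positive-dimensional algebraic variety defined over $\overline{\mathbb Q}$; because every algebraic number is $\es$-definable in $\overline{\R}$, $V$ itself is $\es$-definable in $\overline{\R}$, and therefore also in $\la\overline{\R},P\ra$.

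Next I would produce the candidate $Y$ using $P$. In case~(A), $P\elsub\cal R$ contains $\overline{\mathbb Q}\cap\R$, so $A\sub V\cap P^n$; by density of $P$ the set $V\cap P^n$ is dense in $V$ and hence infinite, and it is $\es$-definable in $\la\overline{\R},P\ra$. My first candidate is $Y=V\cap P^n$, possibly intersected with an $\es$-definable semialgebraic refinement to force $Y\sub X$. In case~(B), $P$ is dcl-independent and hence algebraically independent over $\Q$, so $A$ lies in $V\sm P^n$; the dual candidate is an $\es$-definable subset of $V$ avoiding $P$ at the appropriate coordinates, carved out of $V$ using the $\es$-definable predicate $\R\sm P$.

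The main obstacle will be verifying that $Y$ lands in $X$, rather than merely in $cl(X)$: the density of $X$ in $T$ only delivers the latter. To close this gap I would invoke the structural results from \cite{egh}, which describe $cl(X)\sm X$ in terms of the $P$-trace of $X$ in each of the two settings, and combine them with the $\overline{\mathbb Q}$-definability of $V$ to select an $\es$-definable refinement of the candidate that genuinely sits inside $X$. I expect this case-by-case structural argument, running in parallel for (A) and (B), to be the most delicate portion of the proof.
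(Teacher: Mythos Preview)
Your outline correctly identifies that the density conclusion $T\sub cl(X\cap T)$ only places your candidate inside $cl(X)$, not inside $X$; but the step you label ``the most delicate portion'' is in fact the entire content of the theorem, and the Zariski-closure construction does nothing to bridge it. If $V$ is the Zariski closure of $A=X\cap T\cap\overline{\Q}^n$, then $V$ is typically much larger than $T$ (for instance, if $T$ is a bounded open box then $V=\R^n$), so $V\cap P^n$ has no reason to lie in $X$ or even in $cl(X)$. Restricting further to $V\cap T\cap P^n$ only lands you in $T\cap P^n$, still not in $X$; and your appeal to ``structural results from \cite{egh} describing $cl(X)\sm X$'' would have to manufacture an $\es$-definable refinement lying inside $X$, which is precisely the statement you are trying to prove. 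A smaller point: the sets $T$ in the definition of $X^{alg}_t$ range over all infinite connected semialgebraic sets, an uncountable family, so your pigeonhole step already requires the refinement to $\Q$-sets used in Theorems~\ref{main-dense} and~\ref{main-indep}.

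The paper avoids the closure issue altogether, and differently in the two cases. In case~(A), van den Dries's trace theorem gives an $\cal L$-definable $Y$ with $X\cap P^n=Y\cap P^n$; Pila's theorem applied to $Y$ yields a $\Q$-set $Z\sub Y$, and then $Z\cap P^n\sub Y\cap P^n=X\cap P^n\sub X$ automatically---no closure argument is needed. In case~(B), the argument does not pass through $X^{alg}_t$ at all: it runs an induction on the large dimension of $X$ via the $H$-cone decomposition, reducing to $X=h(J)$ for a supercone $J\sub\R^k$, and then invokes the constructive Lemma~\ref{containS}, which produces inside any supercone an infinite set of the form $(b+P^k)\cap B$ with $b\in\cal A^k$ and $B$ an $\es$-definable open box. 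This is a direct construction of a subset of $J$, hence of $X$, not an intersection with a closure. Your ``dual candidate'' built from $\R\sm P$ has no analogue here; the independence of $P$ is exploited not to avoid $P$, but (via Corollary~\ref{smallA}) to show that only finitely many algebraic $b$ can fail to satisfy $(b+P^k)\cap sh(J)\sub J$.
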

Note that such $X$ is dense in $cl(Y)$, which is semialgebraic by \cite[Section 2]{egh}. 


\smallskip

A few words about the general tame setting 
are in order. As o-minimality can only be used to model phenomena that are locally finite, many authors  have early on sought expansions of o-minimal structures which escape from the o-minimal context,  yet preserve the tame geometric behavior on the class of all definable sets. These expansions have recently  seen significant growth
(\cite{bz, beg, bh,dms1,vdd-dense,dg,gh, ms}) and are by now divided into two important categories of structures: those where every open definable set is already definable in the o-minimal reduct and those where an infinite discrete set is definable. Cases (A) and (B) from Theorem \ref{main} belong to the first category. Further examples of this sort can be found in \cite{dms1} and \cite{egh}. Certain point counting theorems in the second category have recently appeared in \cite{cm}. In both categories, sharp \emph{cone decomposition theorems} are by now at our disposal (\cite{egh} and \cite{tycho}), in analogy with the cell decomposition theorem known for o-minimal structures.
%

Expansions  $\cal R$ of type (A) are called \emph{dense pairs} and were first studied by van den Dries in \cite{vdd-dense}, whereas expansions of type (B) were recently introduced by Dolich-Miller-Steinhorn in \cite{dms2}.  These two examples are representative of the first category and  are often thought of as ``orthogonal" to each other, mainly because in the former case $\dcl(\es)\sub P$, whereas in the latter, $\dcl(\es)\cap P=\es$. This orthogonality is vividly reflected in our proof of Theorem \ref{main}. Indeed, since the set $\cal A$ of algebraic points is contained in $\dcl(\es)$, we have $\cal A\sub P$ in the case of dense pairs and  $\cal A\cap P=\es$ in the case of dense independent sets. Based on this observation, the proof for (A) becomes almost immediate, assuming facts from \cite{vdd-dense}, whereas the proof for (B) makes an essential use of the aforementioned cone decomposition theorem from \cite{egh}. 

The current work provides an extension of the influential Pila-Wilkie theorem to the above two settings. The next step is, of course, to explore any potential applications to number theory and Diophantine geometry.  Even though it is currently unclear whether the exact setting of Theorem \ref{main} will yield any, the machinery used in our proofs is also available in other settings, or it may be possible to develop therein. 
Two far reaching generalizations of our two settings are lovely pairs \cite{bv1} and $H$-structures \cite{bv2}, respectively. Those settings can also accommodate structures coming from geometric stability theory, such as pairs of algebraically closed fields, or  $SU$-rank 1 structures, and point counting theorems in them are wildly unknown. 









\vskip.2cm
\noindent\textbf{Notation.} The topological closure of a set $X\sub \R^n$ is denoted by $cl(X).$ If $X, Z\sub \R^n$, we call $X$ dense in $Z$, if $Z\sub cl(X\cap Z)$.
Given any subset $X\subseteq \R^m \times \R^n$ and $a\in \R^m$, we write $X_a$ for
\[
\{ b \in \R^n \ : \ (a, b) \in X\}.
\]
If $m\le n$, then $\pi_m:\R^n\to \R^m$ denotes the projection onto the first $m$ coordinates. We write $\pi$ for $\pi_{n-1}$, unless stated otherwise.
A family $\cal J=\{J_g\}_{g\in S}$ of sets is called definable if $\bigcup_{g\in S}\{g\}\times J_g$ is definable.
We often identify $\cal J$ with $\bigcup_{g\in S}\{g\}\times J_g$.
If $X, Y \subseteq \R$, we sometimes write $XY$ for $X\cup Y$.
By $\cal A$ we denote the set of real algebraic points. If $M\sub \R$, by $M \elsub \cal R$ we mean that $M$ is an elementary substructure of $\cal R$ in the language of $\cal R$.

$ $\\
\noindent\textbf{Acknowledgments.} The author wishes to thank Gal Binyamini, Chris Miller, Ya'acov Peterzil, Jonathan Pila, Patrick Speissegger, Pierre Villemot and Alex Wilkie for several discussions on the topic, and the Fields Institute for its generous support and hospitality during the Thematic Program on Unlikely Intersections, Heights, and Efficient Congruencing, 2017.

\section{The algebraic trace part of a  set}\label{sec-alg}

In this section, we introduce the notion of the \emph{algebraic trace part} of a set, and prove that it  generalizes the notion of the algebraic part of a set definable in an o-minimal structure. We also state a  version of Pila's theorem \cite{pila}, Fact \ref{pw} below, suitable for our purposes. 

The proof of Theorem \ref{main}, in both cases (A) and (B), is by reducing it to Pila's theorem, Fact \ref{pw} below. 
The formulation of that fact involves a refined version of the usual algebraic part of a set, which prompts the following definitions.



\begin{defn} Let $A\sub \R$ be a set. An \emph{$A$-set} is an infinite connected semialgebraic set definable over $A$. If it is, in addition, a cell, we call it an \emph{$A$-cell}.
\end{defn}

We are mainly interested in $\Q$-sets. One important observation is that the set $\cal A$ of algebraic points is dense in every $\Q$-set. This fact will be crucial in the proofs of Lemma \ref{dense1} and Theorem \ref{main-indep} below.


\begin{defn}\label{def-alg1} Let $X\sub \R^n$ and $A\sub \R$. The \emph{algebraic part of $X$ over $A$}, denoted by $X^{alg_A}$, is the union of all $A$-subsets of $X$. That is,
  $$X^{alg_A}=\bigcup\{T\sub X : T \text{ is an $A$-set}\}.$$
\end{defn}

It is an effect of the proof in \cite{pila} that the following statement holds.
\begin{fact}\label{pw}
Let $X\sub \R^n$ be $\cal L$-definable. Then $X\sm X^{alg_\Q}$ has few algebraic points.
\end{fact}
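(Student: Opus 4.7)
The plan is to derive this as a refinement of Pila's algebraic-points theorem from \cite{pila}, by tracking rationality of parameters through Pila's construction of ``blocks''. First, I would recall the block formulation implicit in Pila's proof: for every $\cal L$-definable $X\sub\R^n$, every $k\in\Z^{>0}$ and every $\epsilon>0$, there is a constant $C=C(X,k,\epsilon)$ such that, for each $T>1$, the set $X(k,T)$ is covered by at most $CT^\epsilon$ infinite connected semialgebraic subsets of $X$ (the ``blocks''), together with at most $CT^\epsilon$ isolated exceptional algebraic points. The standard Pila statement $N_k(X\sm X^{alg},T)=O(T^\epsilon)$ follows by absorbing the blocks into $X^{alg}$.

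The sharpening needed to obtain $X^{alg_\Q}$ in place of $X^{alg}$ is to verify that Pila's blocks may be taken $\Q$-definable. Pila's construction proceeds, after a $C^r$-parameterization of $X$ via the Yomdin--Gromov theorem, by a determinant method which forces many of the algebraic points of $X(k,T)$ lying in a given chart to satisfy a single polynomial relation. The coefficients of that polynomial arise as a nontrivial solution to an integer linear system, so the polynomial can be chosen with rational (indeed integer) coefficients; its zero set is therefore $\Q$-definable. The corresponding block is then extracted as a positive-dimensional semialgebraic component of this $\Q$-hypersurface contained in $X$, and hence may be taken $\Q$-definable (at worst, one replaces a given block by a $\Q$-definable infinite connected semialgebraic subset of $X$ of the same dimension that still contains the algebraic points being counted).

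With this refinement in hand, every point of $X(k,T)$ is either one of the $O(T^\epsilon)$ exceptional points or lies in one of the blocks, hence in $X^{alg_\Q}$. Consequently $N_k(X\sm X^{alg_\Q},T)=O(T^\epsilon)$, which is the ``few algebraic points'' conclusion. The main obstacle in carrying this out is the $\Q$-definability claim for the blocks: it demands a careful reading of \cite{pila} to confirm that the coefficients of the fitting polynomials, and the blocks derived from them, can be chosen over $\Q$ regardless of the (possibly transcendental) parameters used to define $X$ in $\cal R$. An alternative route, if available, would be to quote a block-decomposition theorem already stated over $\Q$-definable families, bypassing this verification altogether.
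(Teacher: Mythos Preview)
Your proposal is aligned with the paper's treatment: the paper does not actually prove this Fact, but simply records it with the sentence ``It is an effect of the proof in \cite{pila} that the following statement holds.'' Your sketch is precisely an elaboration of what that sentence means---namely, that tracking the determinant-method construction in \cite{pila} shows the auxiliary hypersurfaces, and hence the resulting blocks, can be taken over $\Q$. You have correctly identified both the mechanism (integer coefficients of the interpolation polynomials) and the residual delicacy (ensuring the blocks, not merely the hypersurfaces, are $\Q$-definable despite $X$ possibly having transcendental parameters); the paper sidesteps this by deferring entirely to \cite{pila}.
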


Let us now also refine Definition \ref{def-alg2} from the introduction, as follows.


\begin{defn}\label{def-alg2b} Let $X\sub \R^n$ and $A\sub \R$. The \emph{algebraic trace part of $X$ over $A$}, denoted by $X^{alg_A}_t$ is the union of all traces of $A$-sets in which $X$ is dense. That is,
  $$X^{alg_A}_t=\bigcup\{X\cap T: \text{$T$  an $A$-set, $X$  dense in $T$}\}$$
\end{defn}

\begin{remark}$ $

(1) An $\R$-set is exactly an infinite connected semialgebraic set. Also, $X^{alg_\R}=X^{alg}$ and $X^{alg_\R}_t=X^{alg}_t$.

(2) In Theorems \ref{main-dense} and \ref{main-indep} below, we prove Theorem \ref{main} after replacing $X^{alg}_t$ by $X^{alg_\Q}_t$. Since the latter set is contained in the former, these are stronger statements.


\end{remark}

\begin{remark}\label{alternative}
An alternative expression for $X^{alg_A}_t$ is the following:
$$X^{alg_A}_t=\bigcup\{Y\sub X: \text{$cl(Y)$ is  an $A$-set}\}.$$
$\sub$. Let $T$ be an $A$-set such that $X$ is dense in $T$. Set $Y=X\cap T\sub X$. Then $T\sub cl(Y)\sub cl(T)$, and hence $cl(Y)=cl(T)$ is an $A$-set, as required.\smallskip

\noindent $\supseteq$. Let $Y\sub X$ such that $cl(Y)$ is an $A$-set. Set $T=cl(Y)$. Then $Y\sub X\cap T$ and $T\sub cl(X\cap T)$, as required.
\end{remark}



The goal of this section is to prove the following proposition.
This result is not essential for the rest of the paper, but we include it  here as it provides  canonicity of our definitions. Observe also
that it is independent of the expansion $\WR$ of $\cal R$ we consider.


\begin{prop}\label{alg12} Let $X\sub \R^n$ be an $\cal L$-definable set. Then
$$X^{alg}=X^{alg}_t.$$
\end{prop}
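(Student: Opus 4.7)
For $X^{alg}\sub X^{alg}_t$: if $S\sub X$ is an infinite connected semialgebraic set, then $cl(S)$ is an infinite connected semialgebraic set containing $S$, and Remark~\ref{alternative} (with $A=\R$) gives $S\sub X^{alg}_t$. For the reverse inclusion, fix $x\in X\cap T$ where $T$ is an infinite connected semialgebraic set with $T\sub cl(X\cap T)$. Let $d=\dim T$ and $V=T\sm X$. Both $T$ and $X\cap T$ are $\cal L$-definable and $X\cap T$ is dense in $T$, so the o-minimal frontier inequality yields $\dim V<d$. It suffices to produce an infinite connected semialgebraic $S\sub X$ with $x\in S$, since this witnesses $x\in X^{alg}$.

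The plan is to construct a semialgebraic arc $\gamma\sub T$ through $x$ with $\gamma\cap V$ finite. Granted such $\gamma$, the difference $\gamma\sm V$ is semialgebraic (a semialgebraic arc minus finitely many points), and the connected component of $x$ in $\gamma\sm V$ is an infinite connected semialgebraic subset of $T\sm V=X\cap T\sub X$ containing $x$, as required.

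To build $\gamma$, take a semialgebraic cell decomposition of $T$ and pick a top-dimensional semialgebraic cell $C$ (of dimension $d$) with $x\in cl(C)$; such $C$ exists because $T$ has dimension $d$. If $x\in C$, use a semialgebraic chart $\phi:C\to U\sub\R^d$ identifying $C$ as a semialgebraic graph over an open cell $U$, and transport the problem to $\R^d$: since $\phi(V\cap C)$ is $\cal L$-definable of dimension $<d$, a standard fiber-dimension argument (via the map $\phi(V\cap C)\sm\{\phi(x)\}\to S^{d-1}$ sending $p$ to the direction from $\phi(x)$) shows that the set of $\theta\in S^{d-1}$ whose associated line through $\phi(x)$ meets $\phi(V\cap C)$ infinitely often has dimension $<d-1$; so a good direction exists, and its $\phi$-preimage (restricted to a small ball in $U$) is the required $\gamma$. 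If instead $x\in cl(C)\sm C$, invoke semialgebraic curve selection together with the local conic structure of $cl(C)$ near $x$ to parameterize semialgebraic arcs from $x$ into $C$ by the link $L$ of $x$ in $cl(C)$, a semialgebraic set of dimension $d-1$; the same fiber-dimension computation produces a $p\in L$ whose corresponding arc $\gamma_p$ has $\gamma_p\cap V$ finite.

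The main obstacle I anticipate is the boundary case: constructing a usable family of semialgebraic arcs emanating from $x$ requires the conic/triangulation structure of semialgebraic sets near a point, whereas the interior case reduces to a routine generic-line argument in a semialgebraic chart. The rest of the argument is bookkeeping with the o-minimal dimension calculus.
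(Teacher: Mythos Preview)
Your forward inclusion is fine, and your overall strategy for the reverse inclusion---reduce to an $\cal L$-definable lower-dimensional ``bad'' set $V=T\sm X$ and then find a semialgebraic arc through $x$ meeting $V$ only finitely---is sound in spirit. But there is a genuine gap at the step ``pick a top-dimensional semialgebraic cell $C$ (of dimension $d$) with $x\in cl(C)$; such $C$ exists because $T$ has dimension $d$.'' This is false in general: $T$ need not have pure dimension, so $x$ may lie in a lower-dimensional part of $T$ far from any $d$-dimensional cell. For a concrete counterexample, take $T=(\{0\}\times[-1,0])\cup[0,1]^2$ and $x=(0,-1/2)$; every $2$-cell of $T$ lies in the open square and $x$ is not in the closure of any of them. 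Consequently your generic-line argument, which needs a chart of dimension $d$ around $x$ and the bound $\dim\phi(V\cap C)<d$, does not get off the ground. Note also that even if you pick a cell $C$ of the \emph{local} dimension $d'=\dim_x T$, the global estimate $\dim V<d$ is useless: you need $\dim(V\cap C)<d'$ near $x$, which does not follow from $\dim V<d$ when $d'<d$.

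The paper's proof addresses exactly this point. It replaces $d$ by the local dimension $\dim_x(Z)$, chooses $Z_0$ of that dimension with $x\in cl(Z_0)$, and then uses Lemmas~\ref{denseZ0} and~\ref{BZ} to propagate density locally into $Z_0$: from $Z\sub cl(Z\cap X)$ one gets $B\cap Z_0\sub cl(Z_0\cap X)$ for some box $B$, and only then does the frontier inequality yield $\dim\big((B\cap Z_0)\sm X\big)<\dim(B\cap Z_0)$. From there the paper finishes with Lemma~\ref{clZT2}, a short induction on ambient dimension, rather than your generic-line/conic-structure argument. Your approach could be salvaged by inserting this localisation (replace $d$ by $\dim_x T$ and rederive the dimension bound on $V$ locally), after which either your arc construction or Lemma~\ref{clZT2} will finish the job; but as written the proof does not go through.
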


The main idea for proving $(\supseteq)$ is as follows.
Let $Z$ be an $\R$-set with $Z\sub cl(Z\cap X)$. We need to prove that every point $x\in Z\cap X$ is contained in an $\R$-set $W$ contained in $X$. If one  applies  cell decomposition directly to $Z\cap X$, then the resulting cells need not be semialgebraic, as $X$ is not. So we apply cell decomposition only to $Z$, deriving an $\R$-cell $Z_0\sub Z$ with $x\in cl(Z_0)$ and of maximal dimension. We then  show that close enough to $x$, the set $T=Z_0\sm X$ has dimension strictly smaller than $\dim Z_0$. We  use Lemma \ref{BZ} to express this fact properly. Finally, by Lemma \ref{clZT2}, we find an $\R$-set $W_0\sub Z_0\sm T$ with $x\in \cl(W_0)$. We set $W=W_0\cup \{x\}$.


The first lemma asserts that, under certain assumptions, the property of being dense in a set passes to  suitable subsets.

\begin{lemma}\label{denseZ0} Let $X, Z\sub \R^n$ be $\cal L$-definable sets, with $Z\sub cl(Z\cap X)$. Suppose that $Z_0\sub Z$ is a cell with $\dim Z_0 =\dim Z$. Then $Z_0\sub cl(Z_0\cap X)$.
\end{lemma}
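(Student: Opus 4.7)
The plan is to argue by contradiction: set $U := Z_0 \sm cl(Z_0 \cap X)$ and aim to force $U = \es$. Note that $U$ is $\cal L$-definable and, being the complement in $Z_0$ of the relatively closed set $cl(Z_0 \cap X) \cap Z_0$, is open in $Z_0$ in the subspace topology. The key point is that, because $Z_0$ is a cell of dimension $d := \dim Z_0 = \dim Z$, it is a topological $d$-manifold, so any nonempty subset of $Z_0$ that is open in its subspace topology has dimension exactly $d$; hence $U \ne \es$ would force $\dim U = d$.

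Next, I plan to show that every $x \in U$ lies in the frontier $cl(Z \sm Z_0) \sm (Z \sm Z_0)$. Given such $x$, pick an open neighborhood $V$ of $x$ in $\R^n$ witnessing $x \notin cl(Z_0 \cap X)$, so that $V \cap Z_0 \cap X = \es$. The hypothesis $x \in Z \sub cl(Z \cap X)$ forces $V \cap Z \cap X \ne \es$, and combined with $V \cap Z_0 \cap X = \es$ this yields $V \cap (Z \sm Z_0) \cap X \ne \es$. In particular $V$ meets $Z \sm Z_0$, so $x \in cl(Z \sm Z_0)$; since $x \in Z_0$ and $Z_0 \cap (Z \sm Z_0) = \es$, we obtain $x \in cl(Z \sm Z_0) \sm (Z \sm Z_0)$, as claimed.

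The contradiction will then come from the o-minimal frontier dimension inequality, which gives $\dim(cl(Z \sm Z_0) \sm (Z \sm Z_0)) < \dim(Z \sm Z_0) \le \dim Z = d$; so $U$ has dimension strictly less than $d$, clashing with $\dim U = d$ whenever $U \ne \es$. Therefore $U = \es$, which is precisely $Z_0 \sub cl(Z_0 \cap X)$.

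The main point I expect to require care is the claim that a nonempty relatively open subset of the $d$-cell $Z_0$ has dimension $d$. This rests on the fact that in any o-minimal structure a cell of dimension $d$ is a topological $d$-manifold, indeed definably homeomorphic to an open box in $\R^d$, which is established by induction on the ambient dimension using the recursive description of cells as either graphs of continuous $\cal L$-definable functions over cells of equal dimension or as ``strips'' between two such functions over a cell of dimension $d-1$.
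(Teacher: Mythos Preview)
Your argument is correct and essentially identical to the paper's: both show that any point of $Z_0$ outside $cl(Z_0\cap X)$ must lie in the frontier $cl(Z\setminus Z_0)\setminus(Z\setminus Z_0)$, and then derive a contradiction from the o-minimal frontier inequality together with the fact that a nonempty relatively open subset of a $d$-cell has dimension $d$. One cosmetic point: from ``$V$ meets $Z\setminus Z_0$'' alone you cannot conclude $x\in cl(Z\setminus Z_0)$; you should note that the same reasoning applies to every neighborhood $W\subseteq V$ of $x$ (since such $W$ still satisfies $W\cap Z_0\cap X=\emptyset$), which is immediate.
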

\begin{proof}
 Let $x\in Z_0$, and suppose towards a contradiction that $x\not\in cl(Z_0\cap X)$. Then there is an open box $B\sub \R^n$ containing $x$ such that
$B\cap Z_0\cap X=\es$. It follows  that for every $x'\in B\cap Z_0$, $x'\not\in cl(Z_0\cap X)$. 
Since $Z\sub cl(Z\cap X)$,
$$B\cap Z_0\sub cl((Z\sm Z_0)\cap X)\sub cl(Z\sm Z_0)$$
and, hence,
$$B\cap Z_0\sub cl(Z\sm Z_0)\sm (Z\sm Z_0),$$
and thus $\dim(B\cap Z_0)<\dim(Z\sm Z_0)$.
Moreover, since $Z_0$ is a cell and $B\cap Z_0\ne \es$, $\dim(Z_0)=\dim (B\cap Z_0)$. All together,
$$\dim(Z_0)<\dim (Z\sm Z_0)\le \dim Z,$$ 
a contradiction.
\end{proof}

We will need a local version of Lemma \ref{denseZ0}. First, a definition.

\begin{defn}
Let $Z\sub \R^n$ be an $\cal L$-definable set and $x\in Z$. The \emph{local dimension of $Z$ at $x$} is defined to be
$$\dim_x(Z)=\min \{\dim (B\cap Z):\text{ $B\sub \R^n$ an open box containing $x$}\}.$$
\end{defn}

\begin{lemma}\label{BZ} Let $X, Z\sub \R^n$ be infinite $\cal L$-definable sets with $Z\sub cl(Z\cap X)$, and $x\in Z$. Suppose $Z_0\sub Z$ is an $\R$-cell with $\dim_x(Z)=\dim Z_0$ and $x\in cl(Z_0)$. Then there is an open box $B\sub \R^n$ containing $x$, such that  $B\cap Z_0\sub cl(Z_0\cap X)$. Moreover, $B\cap Z_0$ is an $\R$-cell.
\end{lemma}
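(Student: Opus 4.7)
The plan is to localize the proof of Lemma \ref{denseZ0}. The key observation is that the hypothesis $Z \sub cl(Z \cap X)$ restricts well to open subsets: for any open $U \sub \R^n$ and any $z \in U \cap Z$, one has $z \in cl(U \cap Z \cap X)$, since every open neighborhood of $z$ can be intersected with $U$ to yield a smaller neighborhood of $z$, which by hypothesis must meet $Z \cap X$. So inside any open box around $x$, the density hypothesis persists.

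First, using $\dim_x(Z) = \dim Z_0$, I would choose an open box $B_0 \ni x$ with $\dim(B_0 \cap Z) = \dim Z_0$; any smaller open box $B \sub B_0$ around $x$ then still satisfies $\dim(B \cap Z) \le \dim Z_0$. Next, since $Z_0$ is a semialgebraic cell and $x \in cl(Z_0)$, I would invoke the recursive structure of semialgebraic cells (as graphs of, or regions between, continuous semialgebraic functions over lower-dimensional cells) to shrink $B_0$ to an open box $B \sub B_0$ containing $x$ such that $B \cap Z_0$ is a nonempty semialgebraic cell of dimension $\dim Z_0$. Since $Z_0$ is infinite, $\dim Z_0 \ge 1$, so $B \cap Z_0$ is infinite; being a connected semialgebraic cell, it is in particular an $\R$-cell, which settles the ``moreover'' clause.

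For the main density claim, I would argue exactly as in Lemma \ref{denseZ0}, localized to $B$. Suppose towards a contradiction that some $y \in B \cap Z_0$ satisfies $y \notin cl(Z_0 \cap X)$, and pick an open box $B' \sub B$ with $y \in B'$ and $B' \cap Z_0 \cap X = \es$. For each $z \in B' \cap Z_0$, the opening observation (applied with $U = B'$) gives $z \in cl(B' \cap Z \cap X)$; combined with $B' \cap Z_0 \cap X = \es$, this forces $z \in cl(Y)$, where $Y := (Z \sm Z_0) \cap B'$. As $z \in Z_0$ is disjoint from $Z \sm Z_0$, we actually have $z \in cl(Y) \sm Y$, so $B' \cap Z_0 \sub cl(Y) \sm Y$. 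The o-minimal frontier dimension inequality then yields
$$\dim Z_0 = \dim(B' \cap Z_0) \le \dim(cl(Y) \sm Y) < \dim Y \le \dim(B' \cap Z) \le \dim(B_0 \cap Z) = \dim Z_0,$$
a contradiction. Here the first equality uses that $B' \cap Z_0$ is a nonempty relatively open subset of the cell $B \cap Z_0$ of dimension $\dim Z_0$, and hence has the same dimension.

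The main subtlety I foresee is the local cell-structure step: arranging $B$ so that $B \cap Z_0$ is itself a semialgebraic cell of dimension $\dim Z_0$. This is slightly delicate because $x$ may lie only in $cl(Z_0) \sm Z_0$, forcing one to work at a boundary point of $Z_0$ rather than an interior one; standard semialgebraic tools (cell decomposition compatible with $Z_0$ near $x$, or a direct analysis of the continuous semialgebraic functions defining $Z_0$) should suffice, but care is needed to simultaneously preserve connectedness and dimension when shrinking the box.
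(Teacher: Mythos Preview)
Your proposal is correct and follows essentially the same approach as the paper. The only organizational difference is that the paper first arranges $\dim(B\cap Z)=\dim(B\cap Z_0)$ (via a cell decomposition of $Z\setminus Z_0$ rather than your direct appeal to the definition of $\dim_x(Z)$) and then invokes Lemma~\ref{denseZ0} as a black box for $Z':=B\cap Z$, whereas you inline the contradiction argument of that lemma; the underlying reasoning is identical.
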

\begin{proof}
Let $Z\sm Z_0=Z_1\cup \dots  \cup Z_m$ be a decomposition into cells. It is not hard to see from the definition of $\dim_x(Z)$, that there is an open box $B\sub \R^n$ containing $x$, such that for every $1 \le i\le m$, if $B\cap Z_i\ne \es$, then $\dim_x(Z) \ge \dim B\cap Z_i$. We may shrink $B$ if needed so that $B\cap Z_0$ becomes an $\R$-cell. Let $I$ be the set of indices $1\le i\le m$ such that $B\cap Z_i\ne \es$. Set
$$Z':=B\cap Z.$$
Since $Z\sub cl(Z\cap X)$, we easily obtain that $Z'\sub cl(Z'\cap X)$. Moreover, since $x\in cl(Z)$, we have
$$Z' =(B\cap Z_0)\cup \bigcup_{i\in I} (B\cap Z_i),$$
and hence $\dim Z'=\dim (B\cap Z_0)$. Therefore, by Lemma \ref{denseZ0} (for $Z'$ and $B\cap Z_0\sub Z'$),
$$B\cap Z_0\sub cl(B\cap Z_0\cap X)\sub cl(Z_0\cap X),$$
as needed.
\end{proof}

We also need the following lemma.

\begin{lemma}\label{clZT2}
Let $Z\sub \R^n$ be an $\R$-cell, $T\sub Z$ a definable set, and $x\in cl(Z)\sm T$. Suppose that $\dim T<\dim Z$.
Then there is an $\R$-set $W\sub Z\sm T$ with $x\in cl(W)$.
\end{lemma}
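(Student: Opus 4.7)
The plan is to construct $W$ as a single semialgebraic radial arc emanating from $x$ into $Z$ that avoids $T$ near $x$. The intuition is that, since $\dim T<\dim Z=d$, a generic radial direction from $x$ into $Z$ should escape $T$ for small radii, and one only needs to exhibit one such direction.

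I would first endow $Z$ with a conic structure at $x$ via semialgebraic local triangulation. Triangulating $\cl(Z)$ in a small neighborhood of $x$ compatibly with $\{Z,\{x\}\}$ decomposes this neighborhood into finitely many open simplices, each either contained in $Z$ or disjoint from $Z$, with $x$ a vertex of every simplex containing it. Since $\dim_x Z=d$ for any $x\in\cl(Z)$ (as $Z$ is a $d$-cell, every neighborhood of $x$ meets $Z$ in a $d$-dimensional set), the decomposition must contain some $d$-simplex $\sigma$ with $x$ as a vertex and $\sigma^\circ\sub Z$. Writing $\sigma'$ for the face of $\sigma$ opposite $x$, the semialgebraic map
$$\Phi\colon (0,1)\times(\sigma')^\circ\to\sigma^\circ,\qquad \Phi(r,v)=(1-r)x+rv,$$
is a homeomorphism realizing $\sigma^\circ$ as an open cone with apex $x$; for each $v\in(\sigma')^\circ$, the radial arc $r\mapsto\Phi(r,v)$ is a $1$-dimensional $\R$-set lying in $\sigma^\circ\sub Z$ and having $x$ in its closure.

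The core of the argument is a dimension count on the set of ``bad'' directions
$$B^*:=\{v\in(\sigma')^\circ:\Phi((0,\delta)\times\{v\})\sub T\text{ for some }\delta>0\}.$$
Setting $\delta_v:=\sup\{\delta:\Phi((0,\delta)\times\{v\})\sub T\}$ defines a definable function on $B^*$, and $\Phi$ restricts to a definable injection of $\{(r,v):v\in B^*,\ r\in(0,\delta_v)\}$, a set of dimension $\dim B^*+1$, into $T$. This forces $\dim B^*+1\le\dim T<d$, so $\dim B^*<d-1=\dim(\sigma')^\circ$, and consequently $(\sigma')^\circ\sm B^*$ is nonempty.

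Finally, for any $v\in(\sigma')^\circ\sm B^*$, o-minimality of $\cal R$ applied to the definable subset $\Phi^{-1}(T)\cap((0,1)\times\{v\})$ of $(0,1)$ yields some $\epsilon>0$ with $\Phi((0,\epsilon)\times\{v\})\cap T=\es$. Then $W:=\Phi((0,\epsilon)\times\{v\})$ is a connected, infinite, semialgebraic subset of $\sigma^\circ\sub Z$ that is disjoint from $T$ and satisfies $x\in\cl(W)$, as required. I expect the main technical obstacle to be the dimension count for $B^*$, which crucially uses both the injectivity of the cone parametrization $\Phi$ and the fact that $\delta_v$ depends definably on $v$; the remaining steps are routine applications of semialgebraic local triangulation and o-minimality.
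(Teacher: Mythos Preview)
Your argument is correct and takes a genuinely different route from the paper's. The paper proceeds by induction on the ambient dimension $n$: when $\dim Z=n$ it simply asserts, via cell decomposition, that some line segment issuing from $x$ into $Z$ avoids the lower-dimensional $T$; when $\dim Z=k<n$ it chooses a coordinate projection $\pi:\R^n\to\R^k$ that is injective on $Z$, applies the inductive hypothesis to $\pi(Z)$, $\pi(T)$, $\pi(x)$, and pulls the resulting $\R$-set back. Your approach instead treats all cases at once by invoking semialgebraic local triangulation (equivalently, local conic structure) to obtain a cone parametrization of $Z$ near $x$, and then carries out an explicit dimension count on the set $B^*$ of bad directions. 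One small expository point: the simplex $\sigma$ produced by triangulation lives in an auxiliary complex $|K|$, not literally in $\R^n$, so your formula $\Phi(r,v)=(1-r)x+rv$ should be read as the linear cone map in $|K|$ post-composed with the triangulating semialgebraic homeomorphism; the resulting arc $W$ is then semialgebraic because that homeomorphism is, and lies in $Z$ because $h(\sigma^\circ)\sub Z$. With this understood, everything goes through. The trade-off is that the paper's proof uses only cell decomposition and projections, hence lighter machinery, while yours avoids the inductive reduction entirely and makes the ``generic direction misses $T$'' heuristic fully rigorous---in effect, your dimension count on $B^*$ is precisely what justifies the paper's rather terse Case~I.
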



\begin{proof} We work by induction on $n>0$. For $n=0$, it is trivial. 
Let $n>0$. We split into two cases:\smallskip

\noindent Case I: $\dim Z=n$. Since $\dim T< \dim Z$, it follows easily, by cell decomposition, that there is a line segment $W\sub Z$  with initial point $x$, staying entirely outside $T$.\smallskip

\noindent Case II: $\dim Z=k< n$. Let $\pi:\R^n\to \R^k$ be a suitable coordinate projection such that $\pi_{\res Z}$ is injective. Then $\pi(Z)$ is an $\R$-cell, $\pi(T)\sub \pi(Z)$, $\dim \pi(T)< \dim \pi(Z)$ and $\pi(x)\in cl(\pi(Z))$. By inductive hypothesis, there is an $\R$-set $W_1\sub \pi(Z)\sm \pi(T)$, such that $\pi(x)\in cl(W_1)$. Let
$$W=\pi^{-1}(W_1)\cap Z.$$
Then $W$ is clearly an $\R$-set with  $W\sub Z\sm T$, and it is also easy to check that $x\in cl(W)$.
\end{proof}

We are now ready to prove Proposition \ref{alg12}.

\begin{proof}[Proof of Proposition \ref{alg12}] We need to show $X^{alg}_t\sub X^{alg}$. Let $Z$ be an $\R$-set with $Z\sub cl(Z\cap X)$. We need to prove that every point $x\in Z\cap X$ is contained in an $\R$-set $W$ contained in $X$. By cell decomposition in the real field, 
there is a semialgebraic cell $Z_0\sub Z$ over $A$,
such that $\dim_x(Z)=\dim Z_0$ and $x\in cl(Z_0)$. By Lemma \ref{BZ}, there is an open box $B\sub \R^n$ containing $x$, such that $B\cap Z_0$ is an $\R$-cell and $B\cap Z_0\sub cl(Z_0\cap X)$. Let
$$T=(B\cap Z_0)\sm (Z_0\cap X)\sub cl(Z_0\cap X)\sm (Z_0\cap X).$$
Then
$$\dim T < \dim (Z_0\cap X)\le \dim Z_0=\dim (B\cap Z_0).$$ 
Also, $x\in Z\sm T$. Therefore, by Lemma \ref{clZT2} (for $Z=B\cap Z_0$), 
 there is an $\R$-set $W_0\sub (B\cap Z_0)\sm T$ with $x\in cl(W_0)$. But
$$(B\cap Z_0) \sm T=B\cap Z_0\cap X,$$
so $W_0\sub X$. Since $x\in cl(W_0)$, the set  $W=W_0\cup \{x\}$ is connected, and hence the desired $\R$-set.
\end{proof}


\begin{remark}\label{XalgQ} If we specify parameters in Proposition \ref{alg12}, then the proposition need not be true. 
Indeed
$$X^{alg_\Q}\ne X^{alg_\Q}_t.$$ For example, fix a $\dcl$-independent tuple $a=(a_1, a_2)\in \R^2$, and let
$$X=\R^2\sm \{(a_1, y): y>a_2\}.$$
Then $a\in X\sub X^{alg_\Q}_t$, since $cl(X)=\R^2$ is a $\Q$-set.
However, $a\not\in X^{alg_\Q}$.
 Indeed, no open box around $a$ can be contained in $X$. Hence if $a\in X^{alg_\Q}$, there must be some $1$-dimensional semialgebraic set over $\es$ that contains $a$, contradicting the $\dcl$-independence of $a$.
Note that in the proof of Proposition \ref{alg12},  unless $x\in \dcl(\es)$, we cannot conclude that $W$ is semialgebraic over $\es$.

We do not know whether $X^{alg_A}=X^{alg_A}_t$ is true if $X$ is  $A$-definable.
\end{remark}



\begin{remark}
The proof of Proposition \ref{alg12} uses nothing in particular about the real field. In other words, if we fix an expansion $\WM$ of any real closed field $\cal M$, and define the notions of $X^{alg}$ and $X^{alg}_t$ in the same way as in the introduction after replacing `semialgebraic' by `\cal M-definable', and `connected' by `\cal M-definably connected', then for every \cal M-definable set $X$, we have $X^{alg}=X^{alg}_t$.
\end{remark}





We conclude this section with an easy fact.
\begin{fact}\label{union}
Let $X, Y\sub \R^n$ be two definable sets.
\begin{enumerate}
  \item If $X\sub Y$, then $X^{alg_\Q}_t\sub Y^{alg_\Q}_t$.

  \item  (a) If $X\sub Y$ and $Y$ has few algebraic points, then so does $X$.

\noindent (b) If  $X$ and $Y$ have few algebraic points, then so does $X\cup Y$.

  \item If  $X\sm X^{alg_\Q}_t$ and $Y\sm Y^{alg_\Q}_t$ have few algebraic points, then so does \linebreak $(X\cup Y) \sm (X\cup Y)^{alg_\Q}_t $.

\end{enumerate}
\end{fact}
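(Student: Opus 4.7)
My plan is to prove (1), (2), (3) in that order, using each to deduce the next; the whole statement is formal once one unpacks the definitions, so I do not anticipate any serious obstacle. The only mildly non-trivial point is the inclusion chain used in (3), but it is pure Boolean manipulation.

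For (1), I will unpack the definition
$$X^{alg_\Q}_t=\bigcup\{X\cap T : T \text{ a } \Q\text{-set}, \ X \text{ dense in } T\}.$$
Assuming $X\sub Y$, for any $\Q$-set $T$ in which $X$ is dense, density propagates upward: $T\sub cl(X\cap T)\sub cl(Y\cap T)$, so $Y$ is also dense in $T$. Together with $X\cap T\sub Y\cap T$, this shows each defining piece of $X^{alg_\Q}_t$ sits inside $Y^{alg_\Q}_t$, and taking the union over $T$ gives the claim. For (2), both parts are immediate from the counting function: $N_k(\cdot, T)$ is monotone in the first argument, proving (a) since $N_k(X, T)\leqs N_k(Y, T)$; and it is subadditive under unions, proving (b) since $N_k(X\cup Y, T)\leqs N_k(X, T)+N_k(Y, T)$. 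In both cases the class $O_{\cdot, k, \ve}(T^\ve)$ is preserved.

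For (3), I reduce to (2) by an inclusion chain. Applying (1) to the inclusions $X, Y\sub X\cup Y$ gives $X^{alg_\Q}_t\cup Y^{alg_\Q}_t\sub (X\cup Y)^{alg_\Q}_t$, so complementing inside $X\cup Y$ yields
$$(X\cup Y)\sm (X\cup Y)^{alg_\Q}_t \ \sub\ (X\cup Y)\sm (X^{alg_\Q}_t\cup Y^{alg_\Q}_t) \ \sub\ (X\sm X^{alg_\Q}_t)\cup (Y\sm Y^{alg_\Q}_t),$$
where the last inclusion is a direct set-theoretic check: any point of the middle set lies in $X$ or in $Y$ while avoiding both $X^{alg_\Q}_t$ and $Y^{alg_\Q}_t$. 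By hypothesis both $X\sm X^{alg_\Q}_t$ and $Y\sm Y^{alg_\Q}_t$ have few algebraic points, so by (2)(b) their union does, and then (2)(a) transfers this to $(X\cup Y)\sm (X\cup Y)^{alg_\Q}_t$.
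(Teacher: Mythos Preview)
Your proof is correct and follows essentially the same approach as the paper: the paper declares (1) and (2) ``obvious'' and proves (3) via the inclusion chain
\[
(X\cup Y)\sm (X\cup Y)^{alg_\Q}_t \ \sub\ (X\sm (X\cup Y)^{alg_\Q}_t)\cup (Y\sm (X\cup Y)^{alg_\Q}_t)\ \sub\ (X\sm X^{alg_\Q}_t)\cup (Y\sm Y^{alg_\Q}_t),
\]
which uses (1) for the second inclusion and then concludes by (2), just as you do (with a cosmetically different intermediate set).
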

\begin{proof}


\noindent (1) and (2) are obvious. For (3), we have:
$$(X\cup Y) \sm (X\cup Y)^{alg_\Q}_t \sub (X\sm (X\cup Y)^{alg_\Q}_t) \cup (Y\sm (X\cup Y)^{alg_\Q}_t)\sub (X\sm X^{alg_\Q}_t)\cup (Y\sm Y^{alg_\Q}_t),$$
and  we are done by (2).
\end{proof}


\section{Dense pairs}\label{sec-dense}

In this section, we let $\WR=\la \cal \R, P\ra$ be a dense pair.  As mentioned in the introduction, since $P\elsub \cal R$, we have $\cal A\sub\dcl(\es)\sub P$. 
In this setting, Theorem \ref{thm-cor} has a short and illustrative proof, and we include it first. 

\begin{thm}
For every definable set $X$, if $X$ has many algebraic points, then it contains an infinite set which is $\es$-definable in $\la \overline \R, P\ra$.
\end{thm}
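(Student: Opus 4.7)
My plan is to reduce the statement to Pila's theorem (Fact \ref{pw}) by replacing $X$ with an $\cal L$-definable set $S$ having the same trace on $P^n$, using the induced-structure theory for dense pairs. First I would note that $\cal A\sub \dcl_{\overline\R}(\es)\sub \dcl(\es)\sub P$, where the last inclusion uses $P\elsub \cal R$. Since $X$ has many algebraic points, $X\cap \cal A^n$ is infinite, and consequently $X\cap P^n\supseteq X\cap \cal A^n$ is infinite as well.

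Next I would invoke the key structure result of van den Dries \cite{vdd-dense}: the structure induced on $P$ by $\WR$ coincides with the $\cal L$-structure. In particular, for the $\WR$-definable set $X$ there is an $\cal L$-definable set $S\sub \R^n$ with $X\cap P^n=S\cap P^n$. Since $\cal A^n\sub P^n$, intersecting with $\cal A^n$ yields $S\cap \cal A^n=X\cap \cal A^n$, and hence $N_k(S,T)\ge N_k(X,T)$ for all $k$ and $T$; in particular $S$ has many algebraic points. Now I would apply Pila's theorem to $S$: the set $S\sm S^{alg_\Q}$ has few algebraic points, forcing $S^{alg_\Q}\ne \es$ and producing a $\Q$-set $T\sub S$, i.e., an infinite connected semialgebraic set defined over $\Q$.

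Finally I would set $Y:=T\cap P^n$ and verify three things. (i) $Y\sub X$, because $Y\sub S\cap P^n=X\cap P^n\sub X$. (ii) $Y$ is $\es$-definable in $\la \overline\R, P\ra$, because $T$ is $\Q$-semialgebraic (so $\es$-definable in $\overline\R$) and $P$ is a basic predicate. (iii) $Y$ is infinite: since $\cal A\elsub \overline\R$, every open box with rational endpoints meeting $T$ also meets $T\cap \cal A^n$, so $T\cap \cal A^n$ is dense in the infinite set $T$, and hence $Y\supseteq T\cap \cal A^n$ is infinite (using $\cal A\sub P$). The only non-routine step is the induced-structure reduction from $X$ to $S$, which is precisely where the hypothesis $P\elsub \cal R$ of case (A) is used and what makes this case ``almost immediate''; once $S$ is in hand, Pila's theorem does all the remaining work.
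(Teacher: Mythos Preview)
Your proof is correct and follows essentially the same route as the paper's: use $\cal A\sub P$ to pass to $X\cap P^n$, apply van den Dries's induced-structure result to get an $\cal L$-definable set with the same trace on $P^n$, invoke Pila's theorem to find a $\Q$-set inside it, and intersect that $\Q$-set with $P^n$. The only differences are cosmetic (you keep track of $X\cap P^n=S\cap P^n$ explicitly, and you justify density of $\cal A^n$ in the $\Q$-set via $\cal A\elsub\overline\R$ rather than simply citing it); note the minor notational clash where $T$ denotes both the height bound and the $\Q$-set.
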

\begin{proof}
Since $\cal A\sub P$, $X\cap P^n$ also contains many algebraic points. By \cite[Theorem 2]{vdd-dense}, there is an $\cal L$-definable $Y\sub \R^n$, such that $X=Y \cap P^n$. So $Y$ also contains many algebraic points. By Fact \ref{pw}, there is a $\Q$-set $Z\sub Y$. Then the set $Z\cap P^n$ is $\es$-definable in $\la \overline \R, P\ra$ and it is contained in $Y\cap P^n=X$. Since the set of algebraic points $\cal A^n$ is dense in $Z$, we have $Z\sub cl(Z\cap \cal A^n)\sub cl(Z\cap P^n)$, and hence $Z\cap P^n$ is  infinite.
\end{proof}

 We now proceed to the proof of Theorem \ref{main}.

\begin{lemma}\label{dense1}
Let $X=Y\cap P^n$, for some $\cal L$-definable set $Y\sub \R^n$. Then
$$X\cap Y^{alg_\Q}\sub X^{alg_\Q}_t.$$
\end{lemma}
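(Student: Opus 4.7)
The plan is to take an arbitrary $x\in X\cap Y^{alg_\Q}$ and exhibit a $\Q$-set $T$ with $x\in X\cap T$ and $X$ dense in $T$; by Definition~\ref{def-alg2b} this places $x$ in $X^{alg_\Q}_t$ and gives the desired inclusion.

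First, since $x\in Y^{alg_\Q}$, the definition of the algebraic part over $\Q$ furnishes a $\Q$-set $T$ with $x\in T\sub Y$. The whole game is then to verify that $X$ is dense in $T$, i.e.\ that $T\sub cl(X\cap T)$. Because $T\sub Y$, one computes
$$X\cap T = (Y\cap P^n)\cap T = P^n\cap T,$$
so density of $X$ in $T$ is the same as density of $P^n\cap T$ in $T$.

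Here the two features of the setting fit together perfectly. On one hand, $\WR$ is a dense pair, so $P\elsub \cal R$ and thus $\dcl(\es)\sub P$; in particular $\cal A\sub P$ (as noted at the opening of Section~\ref{sec-dense}). On the other hand, the observation recorded in Section~\ref{sec-alg} immediately after the definition of a $\Q$-set says that $\cal A$ is dense in every $\Q$-set, hence $\cal A^n\cap T$ is dense in $T$. Combining,
$$T\sub cl(\cal A^n\cap T)\sub cl(P^n\cap T)=cl(X\cap T),$$
as needed. Then $x\in X\cap T\sub X^{alg_\Q}_t$.

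The argument is essentially a one-line unpacking of the definitions together with the two structural facts $\cal A\sub P$ and ``$\cal A$ is dense in every $\Q$-set.'' There is no real obstacle: the only point that deserves care is not to confuse ``$X\cap Y^{alg_\Q}$'' (what we start with) with ``$X^{alg_\Q}$'' (which would be a much weaker hypothesis giving a $\Q$-set inside $X$ itself); we only know that the chosen $T$ lies in $Y$, not in $X$, which is exactly why the trace version $X^{alg_\Q}_t$ is the right target.
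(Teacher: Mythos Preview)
Your proof is correct and essentially identical to the paper's: both pick a $\Q$-set $T\sub Y$ through $x$, rewrite $X\cap T=P^n\cap T$, and use $\cal A\sub P$ together with density of $\cal A$ in any $\Q$-set to conclude $T\sub cl(\cal A^n\cap T)\sub cl(P^n\cap T)=cl(X\cap T)$. Your closing remark distinguishing $X\cap Y^{alg_\Q}$ from $X^{alg_\Q}$ is a helpful clarification not present in the paper.
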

\begin{proof}
Let $x\in X\cap Y^{alg_\Q}$. So $x$ is contained in a $\Q$-set $Z\sub Y$. We prove that $X$ is dense in $Z$.
Observe that $Z\cap X= Z\cap P^n$.  Since  $\cal A^n\sub P^n$, we have
$$Z\sub cl(Z\cap \cal A^n)\sub cl(Z\cap P^n)= cl(Z\cap X),$$ 
and hence $X$ is dense in $Z$.
\end{proof}

\begin{thm}\label{main-dense} For every definable set $X$, $X\sm X^{alg_\Q}_t$ has few algebraic points.
\end{thm}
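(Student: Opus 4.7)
The plan is to reduce the counting problem for $X$, which is definable in the pair $\WR$, to the counting problem for an $\cal L$-definable set, and then invoke Pila's theorem (Fact \ref{pw}). The leverage point is the fact, recorded in this section, that in a dense pair the set of real algebraic numbers satisfies $\cal A \sub \dcl(\es) \sub P$, so every algebraic point of $X$ already lies in $X \cap P^n$.

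First I would pass from $X$ to its trace on $P^n$. Since $\cal A^n \sub P^n$, every algebraic point counted by $N_k(X\sm X^{alg_\Q}_t, T)$ is in fact an algebraic point of $X\cap P^n$. The set $X\cap P^n$ is $\cal L(P)$-definable and contained in $P^n$, so by the van den Dries structure result \cite[Theorem 2]{vdd-dense} that was used in the previous theorem, there is an $\cal L$-definable set $Y\sub \R^n$ with
\[
X\cap P^n \;=\; Y\cap P^n.
\]

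Next I would feed this presentation into Lemma \ref{dense1}, which tells me that
\[
(X\cap P^n)\cap Y^{alg_\Q} \;\sub\; (X\cap P^n)^{alg_\Q}_t.
\]
Combining this with the monotonicity in Fact \ref{union}(1), and using $X\cap P^n \sub X$, I get $(X\cap P^n)\cap Y^{alg_\Q} \sub X^{alg_\Q}_t$. Now if $a$ is any algebraic point of $X\sm X^{alg_\Q}_t$, then $a\in X\cap P^n \sub Y$, but $a\notin Y^{alg_\Q}$ (else $a$ would be in $X^{alg_\Q}_t$ by the inclusion just derived). Hence every algebraic point of $X\sm X^{alg_\Q}_t$ is an algebraic point of $Y\sm Y^{alg_\Q}$, and so
\[
N_k(X\sm X^{alg_\Q}_t,T) \;\le\; N_k(Y\sm Y^{alg_\Q},T).
\]

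Finally I would invoke Pila's theorem (Fact \ref{pw}) applied to the $\cal L$-definable set $Y$: the right-hand side is $O_{Y,k,\ve}(T^\ve)$ for every $\ve>0$, which gives the conclusion. The only non-routine ingredient is the first reduction step, where one must know that $\cal A \sub P$ so that no algebraic points are lost by intersecting with $P^n$, and that van den Dries's theorem is available to write $X\cap P^n$ as the trace on $P^n$ of an $\cal L$-definable set. Once those two facts are in hand, everything else is formal bookkeeping via Lemma \ref{dense1} and Fact \ref{union}.
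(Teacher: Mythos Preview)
Your proof is correct and follows essentially the same route as the paper: reduce to the trace on $P^n$ using $\cal A\sub P$, write $X\cap P^n=Y\cap P^n$ via \cite[Theorem 2]{vdd-dense}, apply Lemma \ref{dense1} (together with Fact \ref{union}(1)) to get $(X\cap P^n)\cap Y^{alg_\Q}\sub X^{alg_\Q}_t$, and conclude by Fact \ref{pw}. The only cosmetic difference is that the paper first assumes $X\sub P^n$ and then argues, whereas you carry $X\cap P^n$ through the argument directly.
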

\begin{proof} Let $k\in \Z^{>0}$ and $\epsilon\in \R^{>0}$. We first observe that if the statement holds for $X\cap P^n$, then it holds for $X$. Of course, $X\sm X^{alg_\Q}_t\sub X\sm (X\cap P^n)^{alg_\Q}_t$. Since $\cal A^n\sub P^n$, the set  $X$ has the same algebraic points as  $X\cap P^n$, and hence if $(X\cap P^n)\sm (X\cap P^n)^{alg_\Q}_t$ has few algebraic points, then so does $X\sm (X\cap P^n)^{alg_\Q}_t$, and therefore also $X\sm X^{alg_\Q}_t$.

We may thus assume that $X\sub P^n$. By \cite[Theorem 2]{vdd-dense}, there is an $\cal L$-definable $Y\sub \R^n$, such that $X=Y \cap P^n$. By Fact \ref{pw}, $Y\sm Y^{alg_\Q}$ has few algebraic points.  By Lemma \ref{dense1},
$$ X\cap Y^{alg_\Q}\sub X^{alg_\Q}_t.$$
Hence
$$X\sm X^{alg_\Q}_t\sub X\sm Y^{alg_\Q}\sub Y\sm Y^{alg_\Q}$$
has few  algebraic points.
\end{proof}



\section{Dense independent sets}\label{sec-indep}

In this section, $P\sub \R$ is a dense $\dcl$-independent set. The proof of Theorem \ref{main-indep} runs by induction on the \emph{large dimension} of a definable set $X$ (Definition \ref{def-large}), by making  use of the \emph{cone decomposition theorem} from \cite{egh} (Fact \ref{def-cone}). As mentioned in the introduction, since $P$ contains no elements in $\dcl(\es)$, we have $P\cap \cal A=\es$. The base step of the aforementioned induction is to show a generalization of this fact; namely, that for a \emph{small} set $X$ (Definition \ref{def-small}),  $X\cap \cal A$ is finite (Corollary \ref{smallA}). 

\subsection{Cone decomposition theorem}\label{sec-cone}

In this subsection we recall all necessary background from \cite{egh}. The following definition is taken essentially from \cite{dg}.

\begin{defn}\label{def-small}
 Let $X\sub \R^n$ be a definable set. We call $X$ \emph{large} if there is some $m$ and an $\cal L$-definable function $f:\R^{nm}\to \R$ such that $f(X^m)$ contains an open interval in $\R$. We call $X$ \emph{small} if it is not large.
\end{defn}


The notion of a cone is based on that of a supercone, which in its turn generalizes the notion of being co-small in an interval. Both supercones and cones are unions of special families of sets, which not only are definable, but they are so in a very uniform way. Although this uniformity  is not fully exploited in this paper, we include it here to match the definitions from \cite{egh}.

\begin{defn}[\cite{egh}]\label{def-supercone}
A \emph{supercone} $J\sub \R^k$, $k\ge 0$, and its \emph{shell} $sh(J)$ are defined recursively as follows:
\begin{itemize}
\item $\R^{0}=\{0\}$ is a supercone, and $sh(\R^{0})=\R^{0}$.

\item A definable set $J\sub \R^{n+1}$ is a supercone if $\pi(J)\sub \R^n$ is a supercone and there are  $\cal L$-definable continuous $h_1, h_2: sh(\pi(J))\to \R\cup \{\pm\infty\}$ with $h_1<h_2$, such that for every $a\in \pi(J)$, $J_a$ is contained in $(h_1(a), h_2(a))$ and it is co-small in it. We let $sh(J)=(h_1, h_2)_{sh(\pi(J))}$.
\end{itemize}
\end{defn}

Note that, $sh(J)$ is an open cell in $\R^k$ and $cl(sh(J))=cl(J)$.

Recall that in our notation we identify a family $\cal J=\{J_g\}_{g\in S}$ with $\bigcup_{g\in S}\{g\}\times J_g$. In particular, $cl(\cal J)$ and $\pi_n(\cal J)$ denote the closure and a projection of that set, respectively.




\begin{defn}[Uniform families of supercones \cite{egh}]\label{def-uniform}
Let $\Cal J = \bigcup_{g\in S} \{g\}\times J_g\sub \R^{m+k}$ be a definable family of supercones. We call $\Cal J$ \emph{uniform} if there is a cell $V\sub \R^{m+k}$ containing $\cal J$, such that for every $g\in S$ and  $0<j\le k$,
$$cl(\pi_{m+j}(\cal J)_g)=cl(\pi_{m+j}(V)_g).$$
We call such a $V$ a \emph{shell} for $\cal J$. 
\end{defn}


\begin{remark}
A shell for a uniform family of supercones $\cal J$ need not be unique. Also, one can identify a supercone $J\sub \R^k$ with a uniform family of supercones $\cal J\sub M^{m+k}$ with $\pi_m(\cal J)$ a singleton; in that case, a shell for $\cal J$ is unique and equals that of $J$. 
\end{remark}

\begin{defn}[Cones \cite{egh} and $H$-cones\footnote{The letter `$H$' derives from `Hamel basis' - see \cite{dms2} for the motivating example $\la \R, <, +, H\ra$.}] 
\label{def-cone}
A set $C\sub \R^n$ is a \emph{$k$-cone}, $k\ge 0$,  if there are a definable small $S\sub \R^m$, a uniform family $\cal J=\{J_g\}_{g\in S}$ of supercones in $\R^k$, and an $\cal L$-definable continuous function $h:V \subseteq \R^{m+k}\to \R^n$, where $V$ is a shell for $\cal J$, such that
\begin{enumerate}
\item $C=h(\cal J)$, and
\item for every $g\in S$, $h(g, -): V_g\sub \R^k\to \R^n$ is injective.
\end{enumerate}
We call $C$ a \emph{$k$-$H$-cone} if, in addition,  $S\sub P^m$ and $h:\cal J\to \R^n$ is injective. An \emph{($H$-)cone} is a $k$-($H$-)cone for some $k$.
\end{defn}

The cone decomposition theorem \cite[Theorem 5.1]{egh} is a statement about definable sets and functions. Here we are only interested in a decomposition of sets into $H$-cones. Before stating the \emph{$H$-cone decomposition theorem}, we  need the following fact.






\begin{fact}\label{cp} Let $S\sub \R^n$ be an $A$-definable small set. Then $S$ is a finite union of sets of the form $f(X)$, where
\begin{itemize}
  \item  $f:Z\sub \R^m\to \R^n$ is an $\cal L_A$-definable continuous map,
  \item $X\sub  P^m\cap Z$ is $A$-definable, and
\item $f: X\to \R^l$ is injective.
\end{itemize}
\end{fact}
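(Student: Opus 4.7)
The plan is to combine an existing normal-form description of $A$-definable sets in expansions by a dense independent set with a post-hoc refinement step that enforces injectivity.

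First, I would invoke the structural machinery underlying the cone decomposition theorem from \cite{egh} (and ultimately \cite{dms2}): in an expansion by a dense $\dcl$-independent set $P$, every $A$-definable set admits a description that cleanly separates its $P$-part from its $\mathcal{L}_A$-part. Concretely, one shows that an $A$-definable $S \subseteq \R^n$ is a finite union of sets of the form $f(Y)$, where $f : Z \subseteq \R^m \to \R^n$ is $\mathcal{L}_A$-definable and continuous on the cell $Z$, and $Y \subseteq P^m \cap Z$ is $A$-definable. This is the point at which the hypothesis on $P$ matters: $\dcl$-independence prevents any nontrivial $\mathcal{L}$-identity from holding among elements of $P$, so $P$-tuples can always be pulled through as a ``pure'' parameter stratum. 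The smallness hypothesis on $S$ is not needed here for existence of the decomposition, but it is what makes the resulting $m$ meaningful: no piece $f(Y)$ can swallow an open interval.

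Second, I would fix one such piece $f : Z \to \R^n$ with $Y \subseteq P^m \cap Z$ and enforce injectivity via o-minimal cell decomposition applied inside $\cal R$. Since $f$ is $\mathcal{L}_A$-definable and continuous, one can partition $Z$ into finitely many $\mathcal{L}_A$-definable cells $Z_1, \dots, Z_r$ such that on each $Z_i$ either $f$ is injective, or $f$ collapses a coordinate (its fibres are positive-dimensional along a projection direction). In the latter case one replaces $Z_i$ by a transversal section (equivalently, reparametrizes through a lower-dimensional cell on which $f$ becomes injective), applying the standard fibre-dimension trichotomy for definable maps in an o-minimal structure. Setting $X_i := Y \cap Z_i$ (or its image under the reparametrization, which is still $A$-definable and a subset of $P^{m'}$ for an appropriate $m' \le m$), one obtains $f(Y) = \bigcup_i f(X_i)$ with $f\restriction X_i$ injective.

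The main obstacle is, as so often in this setting, the first step: one must cite, rather than re-derive, the normal-form result that every $A$-definable set in $\WR$ factors through $P$ in this sense. Once this is accepted from \cite{egh}, the second step is a routine application of o-minimal cell decomposition, and the conclusion follows by taking the finite union over all pieces.
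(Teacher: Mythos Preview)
Your two-step plan mirrors the paper's in spirit (first absorb the $P$-content, then enforce injectivity), but both steps have genuine gaps.

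\textbf{Step 1.} You assert that \emph{every} $A$-definable set in $\WR$ is a finite union of sets $f(Y)$ with $f$ an $\cal L_A$-map and $Y\sub P^m$, and that ``the smallness hypothesis on $S$ is not needed here''. This is false: any set of that form is small, since $P^m$ is small and images of small sets under $\cal L$-definable maps are small. So, for instance, $\R$ itself admits no such description. Smallness is precisely the hypothesis that buys you containment in some $h(P^m)$; this is what the paper draws from \cite[Lemma 3.11]{egh}. Your remark that smallness only ``makes the resulting $m$ meaningful'' misidentifies its role.

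\textbf{Step 2.} Your injectivity argument is too optimistic. After cell-decomposing $Z$, on a cell $Z_i$ where $f$ is not injective the fibres need not lie along a coordinate direction: take $f(x,y)=x+y$. Any ``transversal section'' or reparametrisation making $f$ injective is then a genuine $\cal L_A$-definable map $\sigma$, not a coordinate projection, and there is no reason for $\sigma(Y\cap Z_i)$ to land in $P^{m'}$; in the example above the natural choice lands in $P+P$, not $P$. Showing that one can always reparametrise while keeping the source inside a power of $P$ is exactly the nontrivial content the paper imports from \cite[Theorem 2.2]{egh2} (the Choice Property). Your sketch treats this as routine o-minimal cell decomposition, which it is not.

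In short, the paper's proof uses smallness to get $S\sub h(P^m)$ via \cite{egh}, and then invokes a dedicated result from \cite{egh2} for injectivity; you have inverted the difficulty, claiming the first step is free and the second is routine, when in fact both require the cited machinery.
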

\begin{proof} By \cite[Lemma 3.11]{egh}, there is an $\cal L_A$-definable map $h:\R^m\to \R^n$ such that $X\sub h(P^m)$. The result follows from \cite[Theorem 2.2]{egh2}. 
\end{proof}

 \begin{fact}[$H$-cone decomposition theorem]\label{fact-cone}
Let $X\sub \R^n$ be an $A$-definable set. Then $X$ is a finite union of $A$-definable $H$-cones.
 \end{fact}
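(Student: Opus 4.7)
My plan is to prove the $H$-cone decomposition theorem by induction on the large dimension $\cldim(X)$ (the same invariant used later for Theorem \ref{main-indep}), with the base case handled by the already-established structure of small sets. If $\cldim(X) = 0$ then $X$ is small, and Fact \ref{cp} writes $X$ as a finite union of pieces of the form $f(X')$ with $X' \sub P^m \cap Z$ an $A$-definable set and $f : Z \to \R^n$ an $\cal L_A$-definable continuous map injective on $X'$. Each such piece is immediately a $0$-$H$-cone in the sense of Definition \ref{def-cone}: take $k = 0$ so that $\R^0 = \{0\}$ is trivially a supercone; set $S = X'$ and form the trivial uniform family $\cal J = S \times \{0\}$ with shell $V = Z \times \{0\}$; and define $h(g, 0) = f(g)$. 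The continuity, smallness, injectivity, and $P^m$-containment conditions of Definition \ref{def-cone} are then exactly the conclusions of Fact \ref{cp}.

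For the inductive step, assume the statement for all $A$-definable sets of large dimension $< d$, and let $\cldim(X) = d > 0$. The closure $cl(X) \sub \R^n$ is $\cal L_A$-definable (by \cite[Section 2]{egh}) and has o-minimal dimension $d$, so I would apply $\cal L$-cell decomposition to $cl(X)$, yielding $\cal L_A$-definable cells $C_1, \dots, C_N$. The intersection of $X$ with the cells of dimension $< d$ has $\cldim < d$ and is dispatched by the induction hypothesis. For each maximal cell $C_j$ of dimension $d$, density of $X$ in $cl(X)$ gives $C_j \sub cl(X \cap C_j)$. The $\dcl$-independence of $P$ then forces $C_j \sm (X \cap C_j)$ to be co-small along each coordinate of a suitable projection $\pi : C_j \to \R^d$: any $\cal L_A$-definable open subset of a fiber lying in the complement would witness an algebraic relation among the $P$-parameters defining $X$, contradicting $\dcl$-independence. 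Iterating along the $d$ coordinates of $\pi$, and letting the fibers be parametrized by a small $A$-definable $S \sub P^m$ obtained from the $P$-parameters in the defining formula for $X$, realizes $X \cap C_j$ as a uniform family of supercones in the sense of Definitions \ref{def-supercone} and \ref{def-uniform}. A final application of Fact \ref{cp} to $S$ presents it as $f_0(X'_0)$ with $X'_0 \sub P^{m'}$ small and $A$-definable and $f_0$ $\cal L_A$-definable and injective on $X'_0$; pulling the family back along $f_0$ and composing with $h$ packages the whole thing as a $d$-$H$-cone.

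The main obstacle will be arranging the fiberwise co-smallness while simultaneously producing a single $\cal L_A$-definable shell cell $V \sub \R^{m+d}$ valid across all $g \in S$, as required by Definition \ref{def-uniform}. Naively, one obtains an $\cal L$-definable open cell $V_g$ for each parameter $g \in S$ separately, and these need not assemble into an $\cal L$-definable family over $\pi_m(\cal J)$. To force the coherence, one must refine $S$ finitely, using a joint o-minimal cell decomposition of the $\cal L_A$-definable set $cl(T)$ where $T$ is the graph of the fiber data, so that the shells $V_g$ fit into a single cell $V$ of $\R^{m+d}$. This coordination is where the $\dcl$-independence of $P$ does the essential work, and it is the point at which my sketch abstracts most heavily over the bookkeeping that \cite{egh} presumably handles via a careful joint induction on the dimensions of both $S$ and the fibers.
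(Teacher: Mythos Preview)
The paper's proof of Fact~\ref{fact-cone} is two lines: it invokes the (strong) cone decomposition theorem already established in \cite[Theorem 5.12]{egh} and \cite[Theorem 2.2]{egh2} to write $X$ as a finite union of $A$-definable cones $h(\cal J)$ with $h$ injective on $\cal J$, and then applies Fact~\ref{cp} to the small parameter set $S$ of each such cone so as to arrange $S\sub P^m$, which is exactly the extra condition for an $H$-cone. Your base case is essentially this second step and is correct.

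Your inductive step, however, attempts to reprove cone decomposition from scratch, and it has a genuine gap beyond the shell-coherence issue you already flag. From $\ldim(X)=d$ you cell-decompose the $\cal L_A$-definable set $cl(X)$, take a top-dimensional cell $C_j$, observe that $X$ is dense in $C_j$, and then assert that the fibers of $X\cap C_j$ over a projection to $\R^d$ are co-small. This is false in general. For $d=1$: let $J\sub (0,1)$ be any supercone and $S=P\cap (1,2)$ (a small, dense, definable set), and put $X=J\cup S$. Then $\ldim X=1$ and $cl(X)=[0,2]$; the unique top-dimensional cell in any $\cal L$-cell decomposition of $[0,2]$ is $(0,2)$, and $X$ is dense there; but $X\cap (1,2)=S$ is small, so $X$ is \emph{not} co-small in $(0,2)$ and cannot be made into a single supercone there. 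Nothing in the cell decomposition of $cl(X)$ detects the point $1$ where the behaviour changes. Your proposed justification---that an $\cal L_A$-definable open subset of a fiber lying in the complement would contradict $\dcl$-independence of $P$---does not engage, because $C_j\sm X$ is only $\WR$-definable, not $\cal L_A$-definable, and independence of $P$ gives no direct bound on which $\WR$-definable subsets of an interval are large. Producing the correct finer partition of $X$ itself (rather than of $cl(X)$) is precisely the content of the cone decomposition theorem in \cite{egh}, which the paper simply quotes rather than reproves.
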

\begin{proof}
By \cite[Theorem 5.12]{egh} and \cite[Theorem 2.2]{egh2}, $X$ is a finite union of $A$-definable cones $h(\cal J)$ with $h:\cal J\to \R^n$ injective (such $h(\cal J)$ is called  \emph{strong cone} in the above references). By Fact \ref{cp}, it is not hard to see that $h(\cal J)$ is a finite union of $A$-definable $H$-cones.
\end{proof}



We next recall the notion of `large dimension' from \cite{egh}.


\begin{defn}[Large dimension \cite{egh}]\label{def-large}
Let $X\sub \R^n$ be definable. If $X\ne \emptyset$, the \emph{large dimension} of $X$ is the maximum $k\in \bb N$ such that $X$ contains a $k$-cone. The large dimension of the empty set is defined to be $-\infty$. We denote the large dimension of $X$ by  $\ldim(X)$.
\end{defn}

Some basic properties of the large dimension that  will be used in the sequel are the following (see \cite[Lemma 6.11]{egh}): for every two definable sets $X, Y\sub \R^n$,
\begin{itemize}
\item if $X\sub Y$, then $\ldim X\le\ldim Y.$
\item if $X$ is $\cal L$-definable, then $\ldim X=\dim X$.
\item $X$ is small if and only if $\ldim X=0$.
\end{itemize}

\subsection{Point counting} We now proceed to the proof of Theorem \ref{main} (B). We  need several preparatory lemmas. First, a very useful fact.

\begin{fact}\label{op} For every $A\sub \R$ with $A\sm P$ $\dcl$-independent over $P$, we have $\dcl_{\cal L(P)}(A)=\dcl(A)$.
\end{fact}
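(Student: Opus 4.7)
The inclusion $\dcl(A)\sub \dcl_{\cal L(P)}(A)$ is immediate, since every $\cal L$-formula is an $\cal L(P)$-formula. For the reverse inclusion, my plan is to argue contrapositively: given $b\in \R$ with $b\notin \dcl(A)$, I will produce an $\cal L(P)$-automorphism of a saturated extension that fixes $A$ pointwise but moves $b$, thereby witnessing $b\notin \dcl_{\cal L(P)}(A)$.

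Fix a sufficiently saturated elementary extension $\WR^*\succ \WR$, with $\cal L$-reduct $\cal R^*$ and predicate interpretation $P^*$. Since $b\notin \dcl(A)$, o-minimality produces some $b'\in \R^*$ with $b'\ne b$ and $\tp_{\cal L}(b'/A)=\tp_{\cal L}(b/A)$, hence a partial $\cal L$-elementary map $f_0:=\text{id}_A\cup\{(b,b')\}$. What is needed is to extend $f_0$ to a global $\cal L(P)$-automorphism $\sigma$ of $\WR^*$; equivalently, to an $\cal L$-elementary bijection of $\R^*$ whose restriction to $P^*$ is a bijection onto $P^*$.

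I would construct $\sigma$ by a standard back-and-forth. At each stage, I have a partial $\cal L$-elementary map $f$ with $\dom(f)\supseteq A\cup\{b\}$ and $f(P^*\cap \dom(f))\sub P^*$. Given a new element $c\in \R^*$ to add to $\dom(f)$, I distinguish two cases. If $c\in P^*$, I seek an image in $P^*$ realizing $\tp_{\cal L}(c/\dom(f))$; since $P^*$ is $\dcl$-independent and $c\notin \dcl((P^*\sm\{c\})\cup \dom(f)\sm P^*)$ (using both the $\dcl$-independence of $P^*$ and the hypothesis that $A\sm P$, and hence $\dom(f)\sm P^*$, is $\dcl$-independent over $P^*$), this $\cal L$-type is non-algebraic, hence locally prescribes only open conditions that density of $P^*$ fills. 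If $c\notin P^*$, I pick any realization of $\tp_{\cal L}(c/\dom(f))$ outside $P^*$, which exists by $\aleph_0$-saturation and the smallness of $P^*$. The symmetric back step is handled identically.

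The key obstacle is verifying the $P^*$-preserving step of the back-and-forth, and this is exactly where the hypothesis on $A\sm P$ is essential. Without the assumption that $A\sm P$ is $\dcl$-independent over $P$, an element of $A\sm P$ might satisfy some $\cal L$-equation tying it to finitely many $P$-elements, which would obstruct freely moving new $P$-elements; the hypothesis rules this out and guarantees that every non-algebraic $\cal L$-type over the current parameters remains non-algebraic and hence realized densely, so density of $P^*$ supplies the needed realizations in $P^*$. The resulting $\sigma$ is an $\cal L$-automorphism of $\cal R^*$ mapping $P^*$ onto $P^*$, hence an $\cal L(P)$-automorphism of $\WR^*$ fixing $A$ with $\sigma(b)=b'\ne b$, as required.
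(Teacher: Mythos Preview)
Your back-and-forth strategy is in principle a valid way to establish such a statement from scratch; it is essentially how the type-determination results for $H$-structures are proved in \cite{dms2} and \cite{bv2}. However, there is a genuine gap in your forth step. You assert that $\dom(f)\sm P^*$ is $\dcl$-independent over $P^*$, claiming this follows from the hypothesis on $A\sm P$. But $\dom(f)$ already contains $b$, and nothing rules out $b\in\dcl(P)\sm\dcl(A)$ with $b\notin P$: for instance $b=p_1+p_2$ for some $p_1,p_2\in P\sm\dcl(A)$. Then $\{b\}$ is not $\dcl$-independent over $P^*$, so your invariant fails at the very first stage. Concretely, after you have placed $b$ and then $p_1$ into $\dom(f)$, the element $p_2=b-p_1\in P^*$ lies in $\dcl(\dom(f))$; its image is forced to be $f(b)-f(p_1)$, which you have done nothing to ensure lies in $P^*$. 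A correct back-and-forth must, before assigning an image to $b$, first handle a finite $P$-basis for $b$ over $A$ and only then let $b'$ be determined by that choice. Your initial selection of $b'$ (matching only the $\cal L$-type over $A$, with no control over $P$-membership or over its relation to $P^*$) therefore also needs repair, as does the one-sidedness of your stated invariant $f(P^*\cap\dom(f))\sub P^*$.

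For comparison, the paper does not attempt any back-and-forth. It simply quotes \cite[Assumption III]{egh}, a structural fact verified in the dense-independent setting: whenever $A\sm P$ is $\dcl$-independent over $P$, the topological closure of every $A$-definable set is already $\cal L_A$-definable. Applied to the singleton $\{x\}$, whose closure is itself, this yields $x\in\dcl(A)$ in one line. Your argument is effectively an attempt to reprove, by hand, a consequence of that cited result.
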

\begin{proof}
Take $x\in \dcl_{\cal L(P)}(A)$. That is, the set $\{x\}$ is $A$-definable in $\la \cal R, P\ra$. By \cite[Assumption III]{egh},  since $A\sm P$ is $\dcl$-independent over $P$, we have that $cl(\{x\})$ is $\cal L_A$-definable. But $cl(\{x\})=\{x\}$. So $x\in \dcl(A)$.
\end{proof}

The following lemma is crucial and relies on the fact that $P$ is $\dcl$-independent.

\begin{lemma}\label{S0c} Let $h:Z\sub P^m\times \R^k\to \R^n$ be a definable injective map. Let $B\sub \R$ be a finite set. Then there is a finite set $S_0\sub P^m$ such that
$$h\left(\bigcup_{g\in P^m\sm S_0}\{g\}\times Z_g\right)\cap \dcl(B)^n=\es.$$
\end{lemma}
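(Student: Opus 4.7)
The plan is to use the injectivity of $h$ to transfer the assumption $h(g,y)\in \dcl(B)^n$ into a constraint on $g$ inside $\dcl_{\cal L(P)}(A\cup B)$ for a finite parameter set $A$, and then invoke Fact \ref{op} together with the pregeometric behavior of $\dcl$ in o-minimal structures to show that this constraint forces $g$ into a finite set.

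Fix a finite $A\sub \R$ over which $h$ and $Z$ are definable in $\WR$, and set
$$F \ := \ P\cap \dcl_{\cal L(P)}(A\cup B).$$
I claim that $S_0:=F^m\sub P^m$ works. Indeed, suppose $g\in P^m$ and $y\in Z_g$ satisfy $h(g,y)\in \dcl(B)^n$. Since $h$ is $A$-definable in $\WR$ and injective, its inverse on $h(Z)$ is also $A$-definable in $\WR$, so every coordinate of $(g,y)$ lies in $\dcl_{\cal L(P)}(A\cup\{h(g,y)\})\sub \dcl_{\cal L(P)}(A\cup B)$. In particular $g\in F^m=S_0$, and contrapositively $h(g,y)\notin \dcl(B)^n$ for every $g\in P^m\sm S_0$ and $y\in Z_g$, as required.

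It remains to show $F$ is finite. Let $C:=A\cup B$, and extract a maximal subset $C'\sub C\sm P$ that is $\dcl$-independent over $P$; by maximality every $c\in (C\sm P)\sm C'$ lies in $\dcl(C'\cup P)$, and the finite character of $\dcl$ provides a finite $P_0\sub P$ with $C\sm P\sub \dcl(C'\cup P_0)$. Setting $P_1:=P_0\cup(C\cap P)$, we get $C\sub \dcl(C'\cup P_1)$. Since $(C'\cup P_1)\sm P = C'$ is $\dcl$-independent over $P$, Fact \ref{op} yields
$$\dcl_{\cal L(P)}(C) \ \sub\ \dcl_{\cal L(P)}(C'\cup P_1) \ =\ \dcl(C'\cup P_1),$$
so $F\sub P\cap \dcl(C'\cup P_1)$.

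Because $\cal R$ is an o-minimal expansion of the real field, $\dcl$ coincides with $\acl$ on $\R$ and induces a pregeometry. As $P$ is $\dcl$-independent by hypothesis, every subset of $F\sub P$ is $\dcl$-independent, and the pregeometric rank bound gives $|F|\le |C'\cup P_1|<\infty$. The main subtlety is arranging the parameters so that Fact \ref{op} becomes applicable; once the $P$-dependencies inside $C$ are absorbed into the enlarged finite set $P_1\sub P$, leaving $C'$ as the $\dcl$-independent-over-$P$ part, the problem collapses to the standard rank bound in the o-minimal pregeometry.
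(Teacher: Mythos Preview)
Your proof is correct and follows essentially the same route as the paper's: both fix a finite parameter set $A$ for $h$, use injectivity to force the coordinates of $g$ into $\dcl_{\cal L(P)}(A\cup B)$, rewrite $A\cup B$ inside $\dcl(C'\cup P_1)$ with $C'$ $\dcl$-independent over $P$ and $P_1\sub P$ finite, invoke Fact~\ref{op} to collapse $\dcl_{\cal L(P)}$ to $\dcl$, and finish with the pregeometric rank bound coming from the $\dcl$-independence of $P$. Your write-up is slightly more explicit about the reduction step and the finiteness bound, but the argument is the same.
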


\begin{proof}
  Suppose $h$ is $A$-definable, with $A$ finite. Let $A_0\sub A\cup B$ and $P_0\sub P$ be finite  so that $A\cup B\sub \dcl(A_0P_0)$ and $A_0$ is $\dcl$-independent over $P$. Suppose $q=h(g, t)$, where $g\in P^m$,  $t\in Z_g$ and  $q\in \dcl(B)$. By injectivity of $h$, all coordinates of $g$ are in
$$\dcl_{\cal L(P)}(A q)\sub\dcl_{\cal L(P)}(AB)\sub \dcl_{\cal L(P)}(A_0P_0)=\dcl(A_0P_0).$$ Since $P$ is $\dcl$-independent, there can  be at most $|A_0|$ many such $g$'s, and hence so can $q$'s.
\end{proof}


Two particular cases of the above lemma are the following (recall, $\cal A\sub \dcl(\es)$).

\begin{cor}\label{S0}
Let $C=h\left(\bigcup_{g\in S}\{g\}\times J_g\right)$ be an $H$-cone. Then there is a finite set $S_0\sub S$ such that $h\left(\bigcup_{g\in S\sm S_0}\{g\}\times J_g\right)$ contains no algebraic points.
\end{cor}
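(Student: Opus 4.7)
The plan is to invoke Lemma \ref{S0c} directly, with $B=\es$, since the two defining properties of an $H$-cone are tailored exactly to its hypotheses. First I would note that by Definition \ref{def-cone}, the index set $S$ of an $H$-cone is a subset of $P^m$, and the parametrising map $h$ is injective on $\cal J=\bigcup_{g\in S}\{g\}\times J_g$. Hence, after restricting $h$ to $\cal J$, viewed as a definable subset of $P^m\times \R^k$, the map $h$ satisfies the hypotheses of Lemma \ref{S0c}, with $Z=\cal J$ so that $Z_g=J_g$ for $g\in S$ and $Z_g=\es$ otherwise.

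Next I would apply Lemma \ref{S0c} with $B=\es$ to obtain a finite set $S_0\sub P^m$ with
$$h\left(\bigcup_{g\in P^m\sm S_0}\{g\}\times Z_g\right)\cap \dcl(\es)^n=\es.$$
Using the recalled inclusion $\cal A\sub \dcl(\es)$, this gives
$$h\left(\bigcup_{g\in P^m\sm S_0}\{g\}\times Z_g\right)\cap \cal A^n=\es.$$
Since $Z_g=\es$ whenever $g\notin S$, the left-hand side coincides with $h\bigl(\bigcup_{g\in S\sm S_0}\{g\}\times J_g\bigr)$, so by replacing $S_0$ with $S_0\cap S$ we obtain a finite subset of $S$ with the desired property.

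There is essentially no obstacle here, because the lemma has been set up to do the real work; the main content of the corollary is the translation $\cal A\sub \dcl(\es)$, which reduces the assertion ``contains no algebraic points'' to ``contains no points of $\dcl(B)^n$ with $B=\es$''. The only minor point of care is ensuring that $h$ is injective on all of $\cal J$ (not just on each fibre $\{g\}\times J_g$), which is precisely the strengthened condition built into the definition of an $H$-cone, as opposed to an arbitrary cone.
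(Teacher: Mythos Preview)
Your proof is correct and follows exactly the approach the paper intends: the corollary is stated immediately after Lemma~\ref{S0c} as one of its ``particular cases (recall, $\cal A\sub \dcl(\es)$)'', with no further argument given. Your write-up simply spells out this application of Lemma~\ref{S0c} with $B=\es$, together with the observation that an $H$-cone has $S\sub P^m$ and $h$ injective on $\cal J$, which is precisely what the paper leaves implicit.
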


\begin{cor}\label{smallA}
Every small set contains only finitely many algebraic points.
\end{cor}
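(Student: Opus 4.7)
The plan is to reduce Corollary \ref{smallA} to Lemma \ref{S0c} by means of Fact \ref{cp}. Let $S\sub \R^n$ be small, and fix a finite parameter set $A$ over which $S$ is definable. By Fact \ref{cp}, $S$ is a finite union of pieces of the form $f(X)$, where $f:Z\sub \R^m\to \R^n$ is an $\cal L_A$-definable continuous map, $X\sub P^m\cap Z$ is $A$-definable, and $f\res X$ is injective. Since a finite union of finite sets is finite, it suffices to show that a single such piece $f(X)$ contains only finitely many algebraic points.

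Next, I would apply Lemma \ref{S0c} to the injective map $h=f\res X$, viewing $X$ as a subset of $P^m\times\R^0=P^m$ (that is, taking $k=0$), and with $B=\es$. The lemma then yields a finite set $S_0\sub P^m$ such that
$$f(X\sm S_0)\cap \dcl(\es)^n=\es.$$
Since $\cal A\sub \dcl(\es)$, this immediately gives $f(X\sm S_0)\cap \cal A^n=\es$.

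Finally, $X\cap S_0$ has at most $|S_0|$ elements, so $f(X\cap S_0)$ is finite; combined with the previous display, $f(X)\cap \cal A^n$ is finite, and summing over the finitely many pieces, so is $S\cap \cal A^n$.

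The only genuinely delicate point is the appeal to Lemma \ref{S0c} with $k=0$: one must identify $Z=X$ with a definable subset of $P^m\times\R^0$ so that the union $\bigcup_{g\in P^m\sm S_0}\{g\}\times Z_g$ of the lemma truly recovers $X\sm S_0$. This is a formal bookkeeping matter, but it is the step that silently uses the hypothesis that $X$ lives in $P^m$ (as provided by Fact \ref{cp}) and thereby uses, through Lemma \ref{S0c}, the $\dcl$-independence of $P$. No induction is needed; this is the base case referenced in the introduction to this section.
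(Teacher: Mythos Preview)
Your proof is correct and follows exactly the approach the paper takes: the paper's proof reads in its entirety ``By Lemma~\ref{S0c}, for $k=0$, and Fact~\ref{cp}.'' Your argument simply unpacks these two citations, including the identification of $X\sub P^m$ with a subset of $P^m\times\R^0$ and the observation that $\cal A\sub\dcl(\es)$.
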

\begin{proof}
By Lemma \ref{S0c}, for $k=0$, and Fact \ref{cp}.
\end{proof}


The key lemma in the inductive step of the proof of Theorem \ref{main-indep} is the following.

\begin{lemma}\label{Fdim0} Let $J\sub \R^k$ be a supercone with shell $Z$, and $B\sub \R$ finite. Then there is an $\cal L$-definable set $F\sub Z$ with $\dim(F)<k$, such that
$$Z\cap \dcl(B)^k\sub J\cup F.$$
\end{lemma}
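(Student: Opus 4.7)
The plan is to induct on $k$. The base case $k=0$ is immediate by taking $F=\emptyset$. For the inductive step, given a supercone $J\subseteq\R^k$ with shell $Z$, I would first apply the inductive hypothesis to the supercone $\pi(J)\subseteq\R^{k-1}$, whose shell is $\pi(Z)=sh(\pi(J))$. This yields an $\cal L$-definable $F'\subseteq\pi(Z)$ of dimension less than $k-1$ satisfying $\pi(Z)\cap\dcl(B)^{k-1}\subseteq\pi(J)\cup F'$. Then $F_1:=Z\cap(F'\times\R)$ is $\cal L$-definable of dimension at most $k-1$ and already captures every $(a,b)\in Z\cap\dcl(B)^k$ whose first component $a$ lies outside $\pi(J)$.

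The remaining points $(a,b)\in Z\cap\dcl(B)^k$ with $a\in\pi(J)$ but $b\notin J_a$ all lie in the $\widetilde{\cal R}$-definable set
$$T:=\{(a,b)\in Z : a\in\pi(J),\ b\in Z_a\setminus J_a\},$$
whose fibers over $\pi_{k-1}(T)\subseteq\R^{k-1}$ are small by the very definition of a supercone. My plan is then to apply the $H$-cone decomposition of Fact \ref{fact-cone} to write $T=C_1\cup\cdots\cup C_N$ as a finite union of $H$-cones, and for each $C_i=h_i(\bigcup_{g\in S_i}\{g\}\times J_g^i)$ to invoke Lemma \ref{S0c} on the injective map $h_i$ to obtain a finite set $S_{i,0}\subseteq P^{m_i}$ for which
$$C_i\cap\dcl(B)^k \ \subseteq\ \bigcup_{g\in S_i\cap S_{i,0}} h_i(\{g\}\times sh(J_g^i)).$$
The right-hand side is $\cal L$-definable (with the finitely many relevant $g$'s among its parameters) and has dimension $\dim(C_i)$. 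Taking $F_2$ to be the union of these pieces over all $i$, and then setting $F:=F_1\cup F_2$, would finish the proof.

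The crux of the argument, and the main obstacle, is verifying that each $C_i$ arising in the decomposition of $T$ has supercone dimension at most $k-1$; otherwise $F_2$ could carry dimension $k$ and ruin the bound. Since $T$ has small fibers over the $(k-1)$-dimensional base $\pi_{k-1}(T)$, the additivity-type behavior of large dimension developed in \cite{egh} gives $\ldim T\leq k-1$. Combined with $C_i\subseteq T$ and the fact that an $H$-cone's supercone dimension agrees with its large dimension, this forces $\dim C_i\leq k-1$, as required.
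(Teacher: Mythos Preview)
Your proposal is correct and follows the same overall strategy as the paper: induct on $k$, apply the inductive hypothesis to $\pi(J)$ to produce $F_1$, isolate the residual set $T=\bigcup_{a\in\pi(J)}\{a\}\times(Z_a\setminus J_a)$, decompose $T$ into cone-like pieces, and invoke Lemma~\ref{S0c} on each piece to cover $T\cap\dcl(B)^k$ by finitely many $\cal L$-definable sets of dimension $\le k-1$.

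The only genuine difference is in how the dimension bound for the pieces of $T$ is obtained. The paper does not use the general $H$-cone decomposition (Fact~\ref{fact-cone}) here; instead it invokes a more tailored result, \cite[Corollary~5.11]{egh} together with \cite[Theorem~2.2]{egh2}, which directly writes $T$ as a finite union of sets $f_i\bigl(\bigcup_{g\in S}\{g\}\times U_g\bigr)$ with $f_i:\R^{m+k-1}\to\R^k$ and $U_g\subseteq\R^{k-1}$. Once one restricts to finitely many $g$'s via Lemma~\ref{S0c}, the resulting $\cal L$-definable set visibly has dimension $\le k-1$, with no further argument needed. Your route instead applies Fact~\ref{fact-cone} and then argues the bound a posteriori: $\ldim T\le k-1$ (by the fiberwise additivity of large dimension in \cite{egh}), hence each $k_i$-$H$-cone $C_i\subseteq T$ satisfies $k_i\le\ldim T\le k-1$. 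Both routes work and rest on the same body of results from \cite{egh}; the paper's version simply packages the decomposition and the dimension control into a single citation, whereas yours separates them and makes the role of $\ldim$ explicit.
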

\begin{proof}
 By induction on $k$. For $k=0$, the statement is trivial. For $k>0$, assume  $J =\bigcup_{g\in \Gamma}\{g\}\times J_g$, where $\Gamma\sub \R^{k-1}$ is a supercone. By inductive hypothesis, there is $F_1\sub \pi(Z)$, such that
 $$\pi(Z)\cap \dcl(B)^{k-1}\sub \Gamma\cup F_1.$$
 Since $\dim (F_1\times \R)<k$, it suffices to write $\left(\bigcup_{g\in \Gamma}\{g\}\times Z_g\right)\cap \dcl(B)^k$ as a subset of $J\cup F_2$, for some $F_2\sub Z$ with $\dim(F_2)<k$. Let  $$X=\bigcup_{g\in \Gamma} \{g\}\times (Z_g\sm J_g).$$
 So we need to prove that $X\cap \dcl(B)^k$ is contained in an $\cal L$-definable set $F_2\sub Z$ with $\dim(F_2)<k$. By \cite[Theorem 2.2]{egh2} and \cite[Corollary 5.11]{egh},
$X$ is a finite union of sets $X_1, \dots, X_l$, each of the form
$$X_i=f\left(\bigcup_{g\in S}\{g\}\times U_g\right),$$
where
\begin{itemize}
  \item $f:V\sub \R^{m+k-1}\to \R^k$ is an $\cal L$-definable continuous map,
  \item $U\sub (S\times \Gamma)\cap V$ is a definable set, and
   \item $f_{\res U}$ is injective.
\end{itemize}
Using Fact \ref{cp}, we may further assume that $S\sub P^m$. By Lemma \ref{S0c}, for $h=f$, there is a finite set $S_0\sub P^m$ such that $$f\left(\bigcup_{g\in S\sm S_0}\{g\}\times U_g\right)\cap \dcl(B)^k=\es.$$
For each $i=1, \dots, l$, and $X_i$ as above, set
 $$D_i=f\left(\bigcup_{g\in S_0}\{g\}\times U_g,\right).$$
Then $F_2=\bigcup_{i=1}^l D_i$ satisfies the required properties.
\end{proof}

\begin{cor}\label{Fdim} Let $C=h(J)\sub \R^n$, where $J\sub \R^k$ is a supercone with shell $Z$, and $h:Z\to \R^n$ an $\cal L$-definable and injective map. Then there is a definable set $F\sub Z$ with $\dim(F)<k$, such that all algebraic points of $h(Z)$ are contained in $h(J\cup F)$.
\end{cor}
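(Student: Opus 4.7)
The plan is to reduce directly to Lemma \ref{Fdim0} by pulling back algebraic points via $h$ and using the standard fact that $\mathcal{A} \subseteq \dcl(\emptyset)$.

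First I would fix a finite set $B \subseteq \R$ of parameters over which the $\cal L$-definable map $h$ is defined (as well as its domain $Z$ and the supercone $J$). Then I would apply Lemma \ref{Fdim0} to $J \subseteq \R^k$ and this set $B$ to obtain an $\cal L$-definable set $F \subseteq Z$ with $\dim(F) < k$ and
\[
Z \cap \dcl(B)^k \;\subseteq\; J \cup F.
\]

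Next I would show that every algebraic point of $h(Z)$ actually comes from a point of $Z \cap \dcl(B)^k$. Let $q \in h(Z) \cap \cal A^n$. Since $h$ is injective on $Z$, there is a unique $z \in Z$ with $h(z) = q$, and this $z$ lies in $\dcl(B \cup \{q\})$. Because each real algebraic number is $\es$-definable in $\overline \R$ (it is determined by its minimal polynomial over $\Z$ together with its rank among the real roots), we have $\cal A \subseteq \dcl(\es)$, and therefore every coordinate of $q$ belongs to $\dcl(\es)$. Consequently $z \in \dcl(B)^k$, and the displayed inclusion above yields $z \in J \cup F$, hence $q = h(z) \in h(J \cup F)$.

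This gives exactly the conclusion of the corollary. The step that requires a sentence of justification is the containment $\cal A \subseteq \dcl(\es)$, but this is classical in the language of the real field; everything else is a direct appeal to Lemma \ref{Fdim0} together with the injectivity of $h$, so I do not foresee any serious obstacle.
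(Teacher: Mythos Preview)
Your proposal is correct and follows the same route as the paper: pick a finite parameter set $B$ over which $h$ is $\cal L$-definable, apply Lemma~\ref{Fdim0} to obtain $F$, and use injectivity of $h$ together with $\cal A\subseteq\dcl(\emptyset)$ to pull back any algebraic point of $h(Z)$ into $Z\cap\dcl(B)^k\subseteq J\cup F$. The only difference is cosmetic: you spell out the justification of $\cal A\subseteq\dcl(\emptyset)$ and of $z\in\dcl(B\cup\{q\})$ a bit more explicitly than the paper does.
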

\begin{proof} Suppose $h$ is $\cal L_B$-definable, and take $F$ be as in Lemma \ref{Fdim0}. Let $x=h(y)\in h(Z)$ be an algebraic point. In particular, $x\in \dcl(\es)$. Since $h$ is $\cal L$-definable and injective, $y\in \dcl(B)\sub  J\cup F$.
\end{proof}


\begin{thm}\label{main-indep} For every definable set $X$, $X\sm X^{alg_\Q}_t$ has few algebraic points.
\end{thm}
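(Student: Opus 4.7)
The plan is to prove the theorem by induction on $\ldim(X)$, using the $H$-cone decomposition theorem (Fact \ref{fact-cone}) to reduce to a single $H$-cone and Pila's theorem (Fact \ref{pw}) to bound the remaining algebraic points. The base case $\ldim(X) = 0$ is immediate from Corollary \ref{smallA}, since a small set has only finitely many algebraic points. For the inductive step with $d := \ldim(X) > 0$, Fact \ref{fact-cone} together with iterated applications of Fact \ref{union}(3) reduce matters to a single $H$-cone $X = C = h(\cal J)$; if $\ldim C < d$ the inductive hypothesis finishes, so we may assume the supercone dimension $k$ equals $d$. Writing $\cal J = \bigcup_{g \in S}\{g\} \times J_g$, Corollary \ref{S0} localises the algebraic points of $C$ to a finite subunion $\bigcup_{g \in S_0} h_g(J_g)$ with $h_g(\cdot) := h(g, \cdot)$ continuous and injective on the shell $Z_g$ of $J_g$, so it suffices to fix $g \in S_0$ and show that $h_g(J_g) \sm C^{alg_\Q}_t$ has few algebraic points.

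The crucial input is Corollary \ref{Fdim}: there is an $\cal L_g$-definable $F_g \sub Z_g$ with $\dim F_g < k$ such that every algebraic point of $Y_g := h_g(Z_g)$ lies in $h_g(J_g) \cup h_g(F_g)$. I split $h_g(J_g) = A \cup B$ with $B := h_g(J_g) \cap h_g(F_g)$ and $A := h_g(J_g) \sm h_g(F_g)$. For $B$, the containment $B \sub h_g(F_g)$ gives $\ldim B \le \dim h_g(F_g) < d$, and $B^{alg_\Q}_t \sub C^{alg_\Q}_t$ by Fact \ref{union}(1); hence the inductive hypothesis yields that $B \sm C^{alg_\Q}_t \sub B \sm B^{alg_\Q}_t$ has few algebraic points. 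For $A$, consider the $\cal L_g$-definable set $Y_g' := Y_g \sm h_g(F_g)$; Fact \ref{pw} gives that $Y_g' \sm (Y_g')^{alg_\Q}$ has few algebraic points, and hence so does $A \sm (Y_g')^{alg_\Q}$.

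The main step -- and what I expect to be the central obstacle to write up cleanly -- is to show that $A \cap (Y_g')^{alg_\Q}$ contains no algebraic points outside $C^{alg_\Q}_t$. The subtraction defining $Y_g'$ is precisely tuned to Corollary \ref{Fdim}: if $p \in A$ lies in a $\Q$-set $T \sub Y_g'$, then $\cal A^n$ is dense in $T$ (a basic property of $\Q$-sets), and by Corollary \ref{Fdim} every algebraic point of $Y_g$ already lies in $h_g(J_g) \cup h_g(F_g)$; but $T \cap h_g(F_g) = \es$ by construction, so the algebraic points of $T$ are forced into $T \cap h_g(J_g) \sub T \cap C$. Therefore $T \sub cl(T \cap C)$, i.e.\ $C$ is dense in $T$, and so $p \in T \cap C \sub C^{alg_\Q}_t$. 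Combining the estimates for $A$ and $B$ and summing over the finite set $S_0$ closes the induction.
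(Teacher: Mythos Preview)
Your proposal is correct and follows essentially the same approach as the paper: induction on $\ldim(X)$, reduction to an $H$-cone via Fact~\ref{fact-cone} and Fact~\ref{union}(3), use of Corollary~\ref{S0} to localise algebraic points, then the split governed by the low-dimensional set $F$ from Corollary~\ref{Fdim}, with Fact~\ref{pw} handling the piece outside $h(F)$ and the inductive hypothesis handling the piece inside. The only cosmetic difference is that the paper immediately reduces to a \emph{single} supercone $J$ (taking $\pi_m(\cal J)$ a singleton) after Corollary~\ref{S0}, whereas you keep the finite parameter set $S_0$ and argue for each $g\in S_0$; the density argument showing $T\sub cl(T\cap C)$ for a $\Q$-set $T\sub Y_g'$ is the same key step in both.
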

\begin{proof} Let $X\sub \R^n$ be a definable set. We work by induction on the large dimension of $X$. If $\ldim(X)=0$, then $X$ is small and the statement follows from Corollary \ref{smallA}. Assume $\ldim(X)=k>0$. By Facts \ref{fact-cone} and \ref{union}(3), we may assume that $X$ is a $k$-$H$-cone, say $h(\cal J)$ with $\cal J\sub \R^{m+k}$. By Corollary \ref{S0}, we may further assume that $\pi_m(\cal J)$ is a singleton, and hence, that $X=h(J)\sub \R^n$, where $J\sub \R^k$ is a supercone. Let $Z$ be the shell of $J$, and $F\sub Z\sm J$  as in Corollary \ref{Fdim}. We have that $X\sub h(Z\sm F)\cup h(F)$. By Fact \ref{union}(3), it suffices to show the statement for each of $X\cap h(Z\sm F)$ and $X\cap h(F)$. \\

\noindent $X\cap h(F)$. We have
$$\ldim(X\cap h(F))\le \ldim\, h(F)=\dim h(F)<k,$$
and hence we conclude by inductive hypothesis.\\

\noindent $X\cap h(Z\sm F)$.   Observe that
$$h(Z\sm F)^{alg_\Q}\sub (X\cap h(Z\sm F))^{alg_\Q}_t.$$
Indeed, let $T\sub h(Z\sm F)$ be a $\Q$-set. We need to show that $T\sub cl(X\cap T)$. By the conclusion of Corollary \ref{Fdim0}, $T\cap \cal A^n\sub T\cap X$. Since the set of algebraic points $\cal A$ is dense in $Y$, we obtain that
$$T\sub cl(T\cap\cal A^n)\sub cl(T\cap X),$$
as required. Hence, by Fact \ref{pw}, the sets
$$(X\cap h(Z\sm F))\sm (X\cap h(Z\sm F))^{alg_\Q}_t\sub h(Z\sm F)\sm h(Z\sm F)^{alg_\Q}$$
has few algebraic points.
\end{proof}

We now turn to the proof of Theorem \ref{thm-cor}. Note that Theorem \ref{main-indep} implies that if a definable set $X$ contains many algebraic points, then it is dense in an infinite semialgebraic set. However, the last conclusion by itself does not guarantee that $X$ contains an infinite set definable in $\la \overline \R, P\ra$. For example, let $\cal R=\la \overline \R, \textup{exp}\ra$ and $X=e^P$. Then $X$ is definable (in $\la \cal R, P\ra$),  and dense in $\R$. Suppose, towards a contradiction, that it contains an infinite set $Y$ definable in $\la \overline \R, P\ra$.  Then $Y$ must be small in the sense of $\la \overline \R, P\ra$. Indeed, $e^P$ is small in the sense of $\WR$, and smallness is preserved under reducts, by \cite[Corollary 3.12]{egh}. Now, since $Y$ is small in the sense of $\la \overline \R, P\ra$, by \cite{el-small},  there is a semialgebraic $h:\R^n\to \R$ and $S\sub P^n$, such that $h_{\res S}$ is injective and $h(S)=Y\sub e^P$. We leave it to the reader to verify that this statement contradicts the $\dcl$-independence of $P$.

We need two preliminary lemmas. 

\begin{lemma}\label{containS}
Let $J\sub \R^k$ be a supercone. Then there is $b\in \cal A^k$, such that $$(b+P^k)\cap sh(J)\sub J.$$
In particular, $J$ contains an infinite set which is $\es$-definable in $\la \overline \R, P\ra$.
\end{lemma}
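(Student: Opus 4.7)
The plan is to prove the first statement by induction on $k$, and then derive the ``in particular'' conclusion by intersecting $b+P^k$ with an open box having rational endpoints inside $sh(J)$.

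The base case $k=0$ is trivial: take the empty tuple. For $k\ge 1$, the inductive hypothesis applied to the supercone $\pi(J)\sub \R^{k-1}$ yields $b'\in \cal A^{k-1}$ with $(b'+P^{k-1})\cap sh(\pi(J))\sub \pi(J)$. I then want to find $b_k\in\cal A$ so that $b=(b',b_k)$ satisfies $(b+P^k)\cap sh(J)\sub J$. Unpacking this using the definition of a supercone: for every $a\in (b'+P^{k-1})\cap \pi(J)$ (by the inductive hypothesis, the intersection with $sh(\pi(J))$ already lies in $\pi(J)$) and every $p_k\in P$, if $b_k+p_k\in (h_1(a),h_2(a))$ then $b_k+p_k\in J_a$. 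Letting
\[
T:=\bigcup_{a\in (b'+P^{k-1})\cap \pi(J)} \bigl((h_1(a),h_2(a))\sm J_a\bigr)\sub \R,
\]
this amounts to $(b_k+P)\cap T=\es$, i.e., $b_k\notin T-P$.

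The key claim is that $T-P$ is small in $\R$. Each fiber $(h_1(a),h_2(a))\sm J_a$ is small by the definition of a supercone, and the index set $(b'+P^{k-1})\cap\pi(J)$ lies in the translate $b'+P^{k-1}$, which is small because $P$ (and hence each $P^n$) is small in the present setting of a dense $\dcl$-independent set. Combining the closure properties of small sets from \cite{egh}---definable unions of small fibers over a small base remain small, projection preserves smallness, and $\cal L$-definable maps such as $(x,y)\mapsto x-y$ send smalls to smalls---both $T$ and then $T-P$ are small. By Corollary~\ref{smallA}, $(T-P)\cap \cal A$ is finite, so any $b_k\in\cal A\sm(T-P)$ (which exists as $\cal A$ is infinite) satisfies the required condition.

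The main obstacle is verifying the smallness of $T$ (and hence $T-P$), which requires careful bookkeeping with the closure properties of smalls---most importantly, that Cartesian powers of the small set $P$ are small and that definable unions of small fibers indexed over a small base remain small. Given the first statement, the ``in particular'' conclusion is immediate: since $sh(J)$ is a nonempty open subset of $\R^k$, it contains an open box $B$ with rational endpoints. The set $Y:=(b+P^k)\cap B$ is infinite by the density of $P^k$ in $\R^k$, is $\es$-definable in $\la \overline \R,P\ra$ (since both $b\in\cal A^k$ and $B$ are $\es$-definable in $\overline \R$), and satisfies $Y\sub (b+P^k)\cap sh(J)\sub J$.
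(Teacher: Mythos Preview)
Your proof is correct and follows essentially the same route as the paper: induction on $k$, use the inductive hypothesis on $\pi(J)$ to get $b'$, then choose $b_k\in\cal A$ avoiding the small set $\bigcup_{a}(Z_a\sm J_a)-P$ via Corollary~\ref{smallA}. The paper forms $(Z_t\sm J_t)-P$ fiberwise and then takes the union over the small index set, whereas you first take the union to form $T$ and then subtract $P$; these are the same set and the smallness justification is identical. Your ``in particular'' clause, intersecting $b+P^k$ with a rational open box inside $sh(J)$, also matches the paper.
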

\begin{proof}
Denote $Z=sh(J)$. We work by induction on $k$. For $k=0$, $J=P^{0}=\R^{0}=\{0\}$, and the statement holds.
Now let $k>1$. By inductive hypothesis, there is $b_1\in \cal A^{k-1}$, such that
$$(b_1+P^{k-1})\cap \pi(Z)\sub \pi(J).$$
Let $S=(b_1 +P^{k-1})\cap \pi(Z)$. For  every $t\in S$, the set $(Z_t \sm J_t)-P$ is small, and hence $\bigcup_{t\in S} (Z_t \sm J_t)-P$ is also small. By Lemma \ref{smallA},  the last set contains only finitely many algebraic points. So there is
$$b_2\in \cal A\sm \bigcup_{t\in S} ((Z_t\sm J_t) - P).$$
But then for every $p\in P$ and $t\in S$, if $b_2+p\in Z_t$, then $b_2+p\in J_t$. That is, $(b_2+P)\cap Z_t\sub J_t.$ Therefore, for $b=(b_1, b_2)\in \cal A^k$, we have that
$$(b+P)\cap Z\sub J.$$

For the ``in particular" clause, let $B\sub sh(J)$ be any $\es$-definable open box, and $b$ as above.  Then $(b+P^k)\cap B\sub J$ is $\es$-definable in $\la \overline \R, P\ra$. It is also infinite, by density of $P$ in $\R$.
\end{proof}

\begin{question}
 Let $J\sub \R^k$ be a supercone. Does $J$ contain a set  which is $\es$-definable  in $\la \overline \R, P\ra$ and has large dimension $k$?
\end{question}

\begin{lemma}\label{QsetA}
Let $X\sub \R^n$ be a definable set and $T\sub \R^n$ a $\Q$-set, such that $\cal A^n\cap T\sub X$. Then $\ldim (X\cap T)=\dim T$.
\end{lemma}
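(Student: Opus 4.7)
The inequality $\ldim(X\cap T) \le \dim T$ is immediate, since $X\cap T \sub T$ and $\ldim Y = \dim Y$ holds for the $\cal L$-definable set $Y = T$. The substantive direction is $\ldim(X\cap T) \ge k$ where $k := \dim T$, and my plan is to argue this by contradiction.

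Assume $\ldim(X\cap T) < k$. First I would apply the $H$-cone decomposition theorem (Fact \ref{fact-cone}) to write $X\cap T = C_1\cup\dots\cup C_N$, with each $C_i = h_i\bigl(\bigcup_{g\in S_i}\{g\}\times J_g^{(i)}\bigr)$ an $H$-cone with shell $V^{(i)}$. By monotonicity of $\ldim$, each $\ldim C_i = k_i < k$. Next I would apply Corollary \ref{S0} to each $C_i$, obtaining a finite $S_0^i \sub S_i$ such that all algebraic points of $C_i$ lie in $\bigcup_{g\in S_0^i} h_i(\{g\}\times J_g^{(i)})$, and hence \emph{a fortiori} in the enlarged set
$$G_i := \bigcup_{g\in S_0^i} h_i(\{g\}\times V_g^{(i)}).$$

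The crux is that for each fixed $g \in S_0^i \sub P^{m_i}$, the shell fiber $V_g^{(i)} \sub \R^{k_i}$ is an $\cal L$-definable open cell and $h_i(g,\cdot)$ is an $\cal L$-definable (over parameters including $g$) continuous injection on it, so $h_i(\{g\}\times V_g^{(i)})$ is $\cal L$-definable of dimension $k_i$. Thus $G := \bigcup_i G_i$ is $\cal L$-definable with $\dim G \le \max_i k_i < k$. On the other hand, since $\cal A$ is dense in every $\Q$-set, $\cal A^n\cap T$ is dense in $T$, and the hypothesis $\cal A^n\cap T \sub X\cap T$ forces $\cal A^n\cap T \sub G$. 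Hence $T \sub cl(G)$, giving $\dim cl(G) \ge k$. But by o-minimality $\dim cl(G) = \dim G < k$, a contradiction.

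The main subtlety I anticipate is that the supercones $J_g^{(i)}$ themselves are only $\WR$-definable (``co-smallness'' involves the predicate $P$), so the naive trimmed cone captured by Corollary \ref{S0} is $\WR$-definable rather than $\cal L$-definable. Replacing each $J_g^{(i)}$ by its $\cal L$-definable shell $V_g^{(i)}$---harmless for capturing algebraic points because $J_g^{(i)} \sub V_g^{(i)}$---is the key move that converts the trap for algebraic points into an honestly $\cal L$-definable low-dimensional set, which is exactly what is needed for the density-dimension contradiction at the end.
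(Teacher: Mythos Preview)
Your proof is correct and follows essentially the same route as the paper's: decompose $X\cap T$ into $H$-cones, use Corollary~\ref{S0} to trap all algebraic points of $X\cap T$ inside finitely many single-parameter pieces, and then use density of $\cal A^n$ in the $\Q$-set $T$ to force one of those pieces to have dimension at least $k$. The paper argues directly by taking closures $cl(h_i(J_i))$ (which are bounded in dimension by the $\cal L$-definable $h_i(sh(J_i))$), while you argue by contradiction and explicitly pass to the shells $V_g^{(i)}$; since $cl(J)=cl(sh(J))$, these are the same maneuver. One small point: when you write ``each $\ldim C_i = k_i < k$'' you are tacitly identifying the index $k_i$ of the $k_i$-$H$-cone with its large dimension; this is true, but the argument only needs $k_i \le \ldim C_i \le \ldim(X\cap T) < k$, which follows immediately from the definition of $\ldim$.
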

\begin{proof} Clearly, $\ldim (X\cap T)\le \ldim T= \dim T$. Let $k=\dim T$.
The set $X\cap T$ is a finite union of $H$-cones. By Corollary \ref{S0}, there are finitely many cones $h_i(J_i)$ contained in $X\cap T$ and containing all algebraic points of $X\cap T$. Since $\cal A^n\cap T\sub X$, $\cal A^n\cap T$ is contained in the union of those cones. So
$$T\sub cl(\cal A^n\cap T)\sub \bigcup_i cl(h_i (J_i)),$$ implying that for some $i$, $\dim cl( h_i(J_i))\ge k$. Therefore, some $J_i$ is a supercone in $\R^k$, implying that $\ldim (X\cap T)\ge k$.
\end{proof}

\begin{theorem}
Let $X\sub \R^n$. If $X$ contains many algebraic points, then it contains an infinite set which is $\es$-definable in $\la \overline \R, P\ra$.
\end{theorem}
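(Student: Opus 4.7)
The plan is to induct on $k := \ldim(X)$. The base case $k = 0$ is immediate: $X$ is small, so by Corollary \ref{smallA} it has only finitely many algebraic points, contradicting the hypothesis. For $k \geq 1$, I would use the $H$-cone decomposition (Fact \ref{fact-cone}) and Corollary \ref{S0} to replace $X$ by a subset of the form $h_g(J_g) \sub X$, for a single $g \in P^m$ and a supercone $J_g \sub \R^k$ with shell $Z$, such that $h_g(J_g)$ still has many algebraic points. Apply Corollary \ref{Fdim} to obtain an $\cal L$-definable $F \sub Z$ with $\dim F < k$ such that every algebraic point of $h_g(Z)$ lies in $h_g(J_g \cup F)$. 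If $h_g(J_g) \cap h_g(F)$ has many algebraic points, then its large dimension is bounded by $\dim F < k$ and the inductive hypothesis delivers the desired $\es$-definable infinite subset. Otherwise $h_g(J_g \sm F) \sub h_g(Z \sm F)$ has many algebraic points; since $h_g(Z \sm F)$ is $\cal L$-definable, Fact \ref{pw} yields a $\Q$-set $T \sub h_g(Z \sm F)$, and injectivity of $h_g$ combined with Corollary \ref{Fdim} forces $\cal A^n \cap T \sub h_g(J_g \sm F) \sub X$.

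Next I would pick a $\Q$-cell $T_0 \sub T$ of dimension $d := \dim T \geq 1$, and after reordering coordinates view $T_0$ as the graph of a $\Q$-definable continuous map $\sigma : U \to \R^{n-d}$ over a $\Q$-definable open cell $U \sub \R^d$. Lemma \ref{QsetA} applied to $T_0$ gives $\ldim(X \cap T_0) = d$, and hence $\ldim(X^*) = d$ in $\R^d$ for $X^* := \pi_d(X \cap T_0) \sub U$, using that $\pi_d\!\res_{T_0}$ is a $\Q$-definable bijection. I would then extract a supercone $J^{**} \sub X^*$ of full dimension $d$ in $\R^d$ as follows: take a $d$-$H$-cone inside $X^*$ and fix one fiber $h^*_g(J^*_g)$; by (semialgebraic) invariance of domain, $h^*_g$ is a homeomorphism from the open cell $V^*_g$ onto an open subset $W \sub \R^d$, and since $\cal L$-definable maps preserve smallness, $h^*_g(J^*_g)$ is co-small in $W$; a Fubini-style induction on coordinates over the small-set framework of \cite{egh} then carves out a full-dimensional supercone inside $h^*_g(J^*_g)$. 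Applying Lemma \ref{containS} to $J^{**}$ furnishes $b \in \cal A^d$ and a $\Q$-definable open box $B^{**} \sub sh(J^{**})$ with $(b + P^d) \cap B^{**} \sub J^{**} \sub X^*$. Setting
$$Y := \{(u, \sigma(u)) : u \in (b + P^d) \cap B^{**}\},$$
we have $Y \sub X \cap T_0 \sub X$ (since $\pi_d\!\res_{T_0}$ is a bijection onto $U$), $Y$ is infinite by density of $P$, and $Y$ is $\es$-definable in $\la \overline{\R}, P\ra$ because $\sigma$ and $B^{**}$ are $\Q$-definable and every algebraic tuple is $\es$-definable in $\overline{\R}$.

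The main obstacle is the supercone-extraction step: the $H$-cone structure of $X$ handed to us by Fact \ref{fact-cone} is intrinsically parameterized by a tuple in $P^m$, and applying Lemma \ref{containS} to a supercone $J_g$ sitting inside $X$ would produce an image $h_g((b+P^d) \cap B^{**})$ that still depends on $g$ and cannot be eliminated. The trick of passing through the $\Q$-definable graph $T_0$ and projecting down to $\R^d$ lets us replace this parameter-laden cone by a bona fide supercone in $\R^d$, from which Lemma \ref{containS} produces an $\es$-definable subset of $X$ using only $\Q$-definable ingredients, an algebraic translation, and the predicate $P$.
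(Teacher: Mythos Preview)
Your overall strategy matches the paper's proof almost exactly: induct on $\ldim(X)$, reduce via Fact~\ref{fact-cone} and Corollary~\ref{S0} to a single cone $h(J)$, split off $h(F)$ using Corollary~\ref{Fdim}, apply Fact~\ref{pw} to the $\cal L$-definable piece $h(Z\sm F)$ to obtain a $\Q$-cell $T$ with $\cal A^n\cap T\sub X$, invoke Lemma~\ref{QsetA}, project to $\R^d$ via the graph structure of $T$, extract a full-dimensional supercone there, apply Lemma~\ref{containS}, and pull back along the $\Q$-definable graph map. The paper carries out precisely these steps.

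The one genuine gap is your justification of the supercone-extraction step. You claim that $h^*_g(J^*_g)$ is \emph{co-small} in $W=h^*_g(V^*_g)$, i.e.\ that $V^*_g\sm J^*_g$ is small. This is false for $d\ge 2$: take the supercone $J=\{(x,y)\in\R^2: y\ne f(x)\}$ for any $\cal L$-definable $f$; its shell is $\R^2$ and the complement is $\Gamma(f)$, which has large dimension $1$, not $0$. In general the complement of a supercone in its shell has large dimension strictly less than $d$ but need not be small, so the subsequent ``Fubini-style induction'' you sketch cannot get started from co-smallness alone, since after applying the homeomorphism $h^*_g$ the fibred supercone structure is destroyed and individual coordinate fibers of $W\sm h^*_g(J^*_g)$ need not be small. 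The conclusion you want --- that a definable $X^*\sub\R^d$ with $\ldim X^*=d$ contains a supercone in $\R^d$ --- is correct, and the paper simply cites \cite[Theorem~5.7(1)]{egh} for it rather than attempting a direct argument. With that citation in place your proof is complete and essentially identical to the paper's.
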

\begin{proof} The beginning of the proof is similar to that of Theorem \ref{main-indep}, and thus we are brief.  We work by induction on $\ldim(X)=0$. If $\ldim X=0$, then $X$ is small and the statement holds trivially by Corollary \ref{smallA}. For $\ldim X=k>0$, we may assume that $X=h(J)$ is a $k$-cone, with $J\sub \R^k$. Let $Z$ be the shell of $J$, and $F\sub Z\sm J$  as in Corollary \ref{Fdim}. So one of $X\cap h(F)$ and $X\cap h(Z\sm F)$ must contain many algebraic points. If the former one does, then we can conclude by inductive hypothesis. If the latter one does, then by Fact \ref{pw}, there is a $\Q$-cell $T\sub h(Z\sm F)$. By the conclusion of Corollary \ref{smallA}, $\cal A^n\cap T\sub X$.  By Lemma \ref{QsetA}, $\ldim X\cap T=\dim T$. Also,
$$T\sub \cl(\cal A^n \cap T)\sub cl(X\cap T),$$
and hence if follows easily that
$$\dim cl(X\cap T) =\ldim X\cap T.$$
Now, if $T$ is open,  then  $\ldim X\cap T=n$, 
and hence $X\cap T$ contains a supercone in $\R^n$ (by \cite[Theorem 5.7(1)]{egh}). By Lemma \ref{containS}, $X\cap T$ contains an infinite set which is $\es$-definable in $\la \overline \R, P\ra$. Suppose $T=\Gamma(f)$ and let $\pi:\R^n\to \R^k$ be a coordinate projection that is injective on $T$. Then $\ldim \pi(X\cap T)=k$ and hence $\pi(X\cap T)$  contains a supercone in $\R^k$, and thus, by Lemma \ref{containS}, an infinite set $S$ which is $\es$-definable in $\la \overline \R, P\ra$. Then $\Gamma(f_{\res S})$ is contained in $X$ and is as desired.
\end{proof}

We conclude with a remark that goes also beyond the scope of this section.

\begin{remark}
 Let $X\sub \R^n$ and  $P\sub \R$ be as in Theorem \ref{main}. 
 Define
$$X^{alg}_P=\bigcup \{Y\sub X : \text{$Y$  infinite $\es$-definable in $\la \overline \R, P\ra$}\}.$$ 
It is natural to ask whether $X\sm X^{alg}_P$ has few algebraic points. An affirmative answer to this question would strengthen Theorem \ref{main}, and its contrapositive would imply Theorem \ref{thm-cor}. For the case of dense pairs, it is actually not too hard to adjust the proofs of Lemma \ref{dense1} and Theorem \ref{main-dense} and obtain an affirmative answer. For the case of dense independent sets, the question is open, and it is possible that an affirmative answer to Question \ref{QsetA} could be relevant.
\end{remark}


\begin{thebibliography}{999999}


\bibitem{bz} O. Belegradek, B. Zilber, {\em The model theory of the field of reals with a subgroup of the unit circle}, J. Lond. Math. Soc. (2) 78 (2008) 563-579.


\bibitem{beg} A. Berenstein, C. Ealy and A. G\"{u}naydin, {\em Thorn independence in the field of real
numbers with a small multiplicative group}, Annals of Pure and Applied Logic 150 (2007), 1-18.

\bibitem{bv1} A. Berenstein, E. Vassiliev, {\em On lovely paris of geometric structures}, Annals of Pure and Applied Logic 161 (2010), 866-878.


\bibitem{bv2} A. Berenstein, E. Vassiliev, \emph{Geometric structures with a dense independent subset}, Selecta Math. N.S. 22 (2016),  191-225.

\bibitem{bh} G. Boxall, P. Hieronymi, {\em Expansions which introduce no new open sets}, Journal of Symbolic Logic, (1) 77 (2012) 111-121.


\bibitem{bg} E. Bombieri and W. Gubler, {\sc Heights in Diophantine Geometry}, Cambridge Univ. Press, Cambridge, 2006.


\bibitem{cm} G. Comte and C. Miller, {\em Points of bounded height on oscillatory sets}, Preprint (2016).

\bibitem{dms1} A. Dolich, C. Miller, C. Steinhorn, {\em Structures having o-minimal open core}, Trans. AMS 362 (2010), 1371-1411.



\bibitem{dms2} A. Dolich, C. Miller, C. Steinhorn, {\em Expansions of o-minimal structures by dense independent sets}, APAL 167 (2016), 684-706.


\bibitem{vdd-dense} L. van den Dries, {\em Dense pairs of o-minimal structures}, Fundamenta Mathematicae 157 (1988), 61-78.


\bibitem{vdd-book} L. van den Dries, {\sc Tame topology and o-minimal structures}, Cambridge University Press, Cambridge, 1998.


\bibitem{dg} L. van den Dries, A. G\"unayd\i n, {\em The fields of real and complex numbers with a small multiplicative group}, Proc. London Math. Soc. 93 (2006), 43-81.

    \bibitem{el-small} P. Eleftheriou, {\em Small sets in dense pairs}, Preprint (2017).

\bibitem{egh} P. Eleftheriou, A. G\"unaydin and P. Hieronymi, {\em Structure theorems in tame expansions of o-minimal structures by dense sets}, Preprint, upgraded version (2017).


\bibitem{egh2} P. Eleftheriou, A. G\"{u}naydin, and P. Hieronymi, {\em The Choice Property in tame expansions of o-minimal structures}, Preprint (2017).



\bibitem{gh} A. G\"unayd\i n, P. Hieronymi, {\em The real field with the rational points of an elliptic curve}, Fund. Math. 215 (2011) 167-175.


\bibitem{marker} D. Marker, {\sc Model Theory: an Introduction}, Springer (Graduate Texts in Mathematics 217).


\bibitem{ms} C. Miller, P. Speissegger, {\em Expansions of the real line by open sets: o-minimality and open cores}, Fund. Math. 162 (1999), 193-208.


\bibitem{pila} J. Pila, {\em On the algebraic points of a definable set}, Selecta Math. N.S. 15 (2009), 151-170.

\bibitem{pila2} J. Pila, {\em O-minimality and the Andr\'e-Oort conjecture for $\mathbb C^n$}, Ann. Math. 173 (2011), 1779--1840.

\bibitem{pw} J. Pila and A. J. Wilkie, {\em The rational points of a definable set}, Duke Math. J. 133 (2006), 591--616.

\bibitem{scanlon} T. Scanlon, {\em Counting special points: logic, Diophantine geometry, and transcendence theory}, Bull. Amer. Math. Soc. (N.S.), 49(1):51–71, 2012.


\bibitem{tycho} M. Tychonievich, {\em Tameness results for expansions of the real field by groups}, Ph.D. Thesis, Ohio State University (2013).


\end{thebibliography}
\end{document}